\let\origvec\vec
\let\vec\origvec
 \numberwithin{equation}{section}
\journalname{myjournal}
\begin{document}
\title{The existence of stratified linearly steady two-mode water waves with stagnation points}


\author{Jun Wang    \and
         Fei Xu\and Yong Zhang
}


\institute{Jun Wang \at
         School of Mathematical Sciences, Jiangsu University, Zhenjiang 212013, People's Republic of China \\
             \email{wangmath2011@126.com}          
              \and
            Fei Xu \at
             School of Mathematical Sciences, Jiangsu University, Zhenjiang 212013, People's Republic of China \\
             \email{xufeiujs@126.com}
        \and
              Yong Zhang (Corresponding author) \at
              School of Mathematical Sciences, Jiangsu University, Zhenjiang 212013, People's Republic of China \\
              \email{18842629891@163.com}
}

\date{Received: date / Accepted: date}

\maketitle

\begin{abstract}

This paper focuses on the analysis of stratified steady periodic water waves that contain stagnation points. The initial step involves transforming the free-boundary problem into a quasilinear pseudodifferential equation through a conformal mapping technique, resulting in a periodic function of a single variable. By utilizing the theorems developed by Crandall and Rabinowitz, we establish the existence and formal stability of small-amplitude steady periodic
capillary-gravity water waves in the presence of stratified linear flows. Notably, the stability of bifurcation solution curves is strongly influenced by the stratified nature of the system. Additionally, as the Bernoulli's function $\beta$ approaches critical values, we observe that the linearized problem exhibits a two-dimensional kernel. Consequently, we apply a bifurcation theorem due to Kielh\"{o}fer that incorporates multiple-dimensional kernels and parameters, which enables us to establish the existence of two-mode water waves. As far as we know, the two-mode water waves in stratified flow are first constructed by us. Finally, we demonstrate the presence of internal stagnation points within these waves.
\end{abstract}

\subclass{76B15, 35Q35, 76B03.}

\keywords{Stratified water waves, Conformal mapping, Stability,
Critical layers}

\section{\bf Introduction}

This paper is devoted to the investigation of two-dimensional
stratified steady periodic waves that propagate over a flat bed,
taking into account the effects of gravity and surface tension as
restoring forces. These waves represent solutions to the
incompressible Euler equations, considering the presence of a free
boundary. Notably, we consider waves with internal stagnation points
and overturning profiles, allowing for a more comprehensive
analysis. To formulate the problem, we consider an unbounded domain
$\Omega$ in the $(X, Y)$ plane, where the impermeable flat bottom is
given by
$$
\mathcal{B}:=\{ (X,0): X\in \mathbb{R} \}
$$
and the unknown surface is parameterized by
$$
\mathcal{S}:=\{(u(s),v(s)): s\in \mathbb{R}\},
$$
where the map $s\mapsto (u(s)-s,v(s))$ is periodic of period $2\pi$, $v(s)\geq 0$ and $u'^{2}(s)+v'^{2}(s)\neq0$.

Since the incompressibility implies that the velocity field of fluid is divergence free, which allows us to introduce the pseudo-stream function $\psi(X,Y)$ with satisfying
\begin{eqnarray}\label{eq1.1}
\psi_{X}=-\sqrt {\rho}w_2, ~~~~~~~\psi_{Y}=\sqrt {\rho}w_1,
\end{eqnarray}
where $(w_1,w_2)$ represents the velocity field of fluid (see Fig. 1).
\begin{figure}[ht]
\centering
\includegraphics[width=0.75\textwidth]{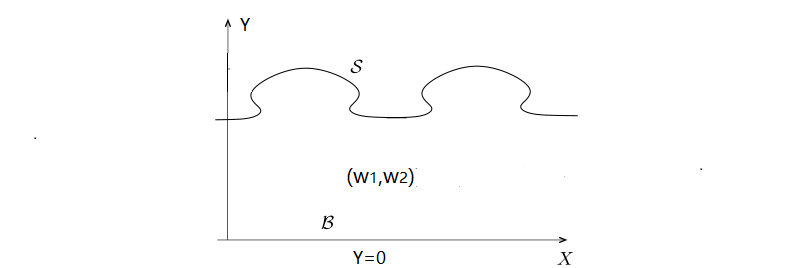}
\caption{The domain of fluid.}
\label{fig1}
\end{figure}\\
Here we add a factor of $\sqrt{\rho}$ to the typical definition of the stream function for incompressible fluid, which could capture the effects of stratification. It's known that $\rho$ is transported, then it must be constant on the streamlines. Thus we let streamline density function $\rho\in C^{2,\alpha}([0,|m|]; R^{+})$ be given by $\rho(X,Y)=\rho(-\psi(X,Y))$ throughout the fluid, where $m$ is the relative mass flux defined by
$$
m:=\int^{v(s)}_0\psi_Y(X,Y)dY, \quad s\in\mathbb{R}.
$$
It follows from Bernoulli's law that
\begin{eqnarray}
E=\frac{|\nabla\psi|^2}{2}+gY\rho+P, \label{eq1.2}
\end{eqnarray}
where $E$ is the hydraulic head and it is a constant along each streamline. In addition, there exists a function $\beta \in C^{1,\alpha}([0,|m|]; R)$ such that
\begin{eqnarray}
\frac{dE}{d\psi}=\beta(-\psi),  \label{eq1.3}
\end{eqnarray}
where $\beta(-\psi)$ is called the Bernoulli function. Indeed, the equation (\ref{eq1.3}) is known as the Yih-Long Equation (see \cite{Walsh09}). Physically it describes the variation of specific energy between the streamlines. Combining (\ref{eq1.2}) with (\ref{eq1.3}), we obtain that
\begin{eqnarray}
\frac{dE}{d\psi}=\Delta\psi-gY\rho'(-\psi)=\beta(-\psi(X,Y)). \label{eq1.4}
\end{eqnarray}
Since the surface tension is considered as a restoring force in this paper, then evaluating Bernoulli's law (\ref{eq1.2}) on the free surface $\mathcal{S}$ gives that
\begin{eqnarray}
2E=|\nabla\psi|^{2}+2gY\rho(-\psi)+2P_{atm}-2\sigma \mathcal{K}~~~on ~~\mathcal{S}, \label{eq1.5}
\end{eqnarray}
where $P_{atm}$ is the atmospheric pressure of the air, $\sigma$ is the coefficient of surface tension and $\mathcal{K}$ is the curvature of free surface $\mathcal{S}$.
Summarizing the considerations above, we obtain the following problem in the physical or $(X,Y)$ plane
\begin{eqnarray}
\left\{\begin{array}{ll}{ \Delta\psi=gY\rho'(-\psi)+\beta(-\psi)} & {\text { in }\Omega}, \\
{\psi=0} & {\text { on } \mathcal{S}}, \\
{\psi=-m} & {\text { on } \mathcal{B}},\end{array}\right. \label{eq1.6}
\end{eqnarray}
and
\begin{eqnarray}
|\nabla\psi|^{2}+2g\rho(-\psi)Y-2\sigma \mathcal{K}=Q \quad on \quad \mathcal{S},  \label{eq1.7}
\end{eqnarray}
where $Q=2(E|_{\mathcal{S}}-P_{atm})$ is the Bernoulli's constant. When the density function $\rho$ is reduced to a constant, then the right term of the first formula in (\ref{eq1.6}) would be $\beta(-\psi)=\sqrt{\rho}\gamma(-\psi)$ with $\gamma$ denoting the vorticity function of the homogeneous fluid.

It is obvious that the problem (\ref{eq1.6})-(\ref{eq1.7}) is difficult to handle due to the unknown surface $\mathcal{S}$. To reformulate the problem in terms of a harmonic function as in \cite{ConstantinSV,ConstantinV}, we need to remove all $\psi$ dependence on the right-hand side of the first equation in (\ref{eq1.6}). Then we will take the similar assumptions as in \cite{Haziot} that
\begin{equation}\label{eq1.8}
\rho(-\psi)=-A\psi+B, \quad \beta(-\psi)=\beta
\end{equation}
with the constants $A,\beta\in \mathbb{R}$ and $B\in \mathbb{R}^+$.
It's worth noting that the surface tension $\sigma$ was neglected in
\cite{Haziot}. However, we will see that some new phenomena would
occur when $\sigma>0$ in our paper. The assumption (\ref{eq1.8})
implies that the streamline density function $\rho(-\psi)$ depends
linearly on the pseudo-stream function $\psi$ and the variation
$\beta(-\psi)$ of energy $E$ along streamlines is homogeneous. As
$A=0$, the assumption $\beta(-\psi)=\beta$ is mathematically
equivalent to the case of constant vorticity. We also would like to
mention the interesting work \cite{HenryBM}, where they prove the
existence of steady periodic capillary-gravity stratified water
waves by using the naive flatting transform. Although the authors in
\cite{HenryBM} didn't require the specific form of stratification as
(\ref{eq1.8}), they added some technical restrictions (A1)-(A4) on
$f(Y,\psi)=gY\rho'(-\psi)+\beta(-\psi)$ such that their cases and
ours are mutually exclusive. In addition, the phenomena of
bifurcation from the two-dimensional kernel and the local stability
of bifurcation curves were not considered in \cite{HenryBM}.
However, these new features are our main focus in this paper.

Now let us talk briefly about the history of the problem. In the
previous century, most work continued to be both two-dimensional and
irrotational, where the stream function $\psi$ is harmonic in the
plane fluid region, which enjoys the advantage of being thoroughly
treatable by tools of complex analysis. However, as vorticity is
included (i.e. let $\rho\equiv1$ and $\sigma\equiv 0$ in
(\ref{eq1.6})-(\ref{eq1.7})), it was unclear for long time how to
leave the regime of small perturbations of configurations with a
flat surface. The breakthrough in this direction is due to
Constantin and Strauss \cite{ConstantinS} on the existence of
homogeneous gravity water waves of large amplitude. Building on
techniques in \cite{Constantin,ConstantinS}, as the density
$\rho=\rho(X,Y)$ and $\sigma\equiv0$ or the density $\rho\equiv1$
and $\sigma>0$, the rigourous results concerning the existence of
stratified flows or capillary-gravity waves were obtained via
bifurcation methods in \cite{Walsh09} and \cite{Wahlen06},
respectively.  Whereafter, the existence of water waves for
stratified flows with the additional complication of surface tension
was then addressed in \cite{Walsh14a,Walsh14b}.

All of these work mentioned are restricted to waves without
stagnation points. In this setting, one can use the so-called
hodograph transformation and express the problem as a quasilinear
elliptic problem. In order to allow for stagnation points and
critical layers, some papers
\cite{EhrnstromEW,HenryAM,HenryBM,Wahlen09} used a naive flattening
transform, where the vertical coordinate is scaled to a constant, to
establish the existence of corresponding water waves. However, a
drawback of this naive flattening is that it needs the surface
profile to be a graph which means the overhanging waves are
excluded. In fact, some numerical results
\cite{DyachenkoH1,DyachenkoH2} and recent mathematical studies
\cite{HurW} show that there are waves with overhanging surface
profiles for the constant vorticity. It is also reasonable to expect
that the profiles are overhanging for more general vorticity
distribution. Therefore, the authors in
\cite{ConstantinSV,ConstantinV} (or \cite{Martin,Matioc}) managed to
construct steady periodic gravity (or capillary-gravity) water waves
by considering the fluid domain as the image of a strip via a
conformal map.

In this paper, the generality of the flows we admit complicates the
analysis immensely, and the resulting governing equations which
emerge take the form of quasilinear pseudodifferential equation
caused by following the idea of conformal mapping due to
\cite{ConstantinV}. Based on this, the first contribution of this
paper is to establish the existence of steady periodic
capillary-gravity water waves of small amplitude for stratified
flows and further analyse its local stability.  The another novelty
here is that we find, as the Bernoulli's function $\beta$ is close
to some critical values, that the linearized problem would possess
two-dimensional kernel. For this, we apply a new version of
Crandall-Rabinowitz type local bifurcation theorem with multiple
parameters and kernels in \cite{Kielhofer} to obtain the two-model water waves.

The rest of this paper is arranged as follows. In Section 2, we
employ a suitable conformal map to transform the problem
(\ref{eq1.6})-(\ref{eq1.8}) into a quasilinear pseudodifferential
equation. Section 3 provides a brief proof of the existence of
small-amplitude stratified capillary-gravity waves, along with the
local stability analysis of these bifurcation curves. In Section 4,
we present a novel version of the Crandall-Rabinowitz type local
bifurcation theorem, which incorporates multiple kernels. Using the theorem, we derive the two-model water waves. Finally, in
Section 5, we demonstrate that the waves we have discovered exhibit
stagnation points within their interior regions.

\section{\bf Reformulation via a conformal mapping}

The main difficulties associated with the system (\ref{eq1.6})-(\ref{eq1.8}) are its nonlinear character and the fact that the interface $\mathcal{S}$ is unknown. Based on the fundamental work \cite{ConstantinV}, we are planing to introduce a suitable conformal mapping to transform the problem (\ref{eq1.6})-(\ref{eq1.8}) into a quasilinear pseudodifferential equation. To this end, let us define the following horizontal strips by
$$
\mathcal{R}_{h}:=\{ (x,y)\in \mathbb{R}^2: -h<y<0 \}.
$$
The main idea is to regard the unknown fluid domain $\Omega$ in $(X, Y)$-plane as the conformal image of the strip $\mathcal{R}_{h}$ in $(x,y)$-plane. For $2\pi$-periodic strip-like domain $\Omega$ of class $C^{2,\alpha}$ for $\alpha\in (0,1)$, whose boundary consists of $\mathcal{B}$ and a $2\pi$-periodic curve $\mathcal{S}$. It follows from \cite{ConstantinV} that there exists a positive constant $h=[v]$, where $[f]$ denotes the mean of function $f$ over one $2\pi$-period, such that we can find a conformal mapping $U+iV$ from $\mathcal{R}_{h}$ onto $\Omega$, which admits an extension as a homeomorphism of class $C^{2,\alpha}$ between the closure of the domain (see Fig. 2).
\begin{figure}[ht]
\centering
\includegraphics[width=0.75\textwidth]{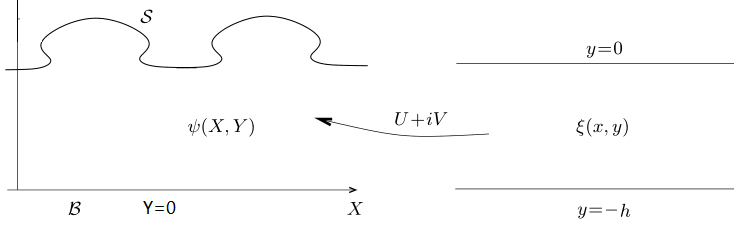}
\caption{The conformal parametrization of the fluid domain.}
\label{fig1}
\end{figure}

In addition, for $(x,y)\in\mathcal{R}_{h}$ there holds that
\begin{equation}\label{eq2.1}
\left\{\begin{array}{ll}
{U(x+2\pi,y)=U(x,y)+2\pi},\\
{V(x+2\pi,y)=V(x,y)},
\end{array}\right.
\end{equation}
with $U,V\in C^{2,\alpha}(\mathcal{R}_{h})$ satisfying
\begin{equation}\label{eq2.2}
\left\{\begin{array}{ll}
{U_x^2(x,0)+V_x^2(x,0)\neq0},\\
{x\mapsto(U(x,0),V(x,0))~\text{is injective}},
\end{array}\right.
\end{equation}
Moreover,
\begin{equation}\label{eq2.3}
\mathcal{S}=\{(U(x,0),V(x,0)):x\in\mathbb{R}\}.
\end{equation}
To understand conformal mapping $U+iV$, we define
$$
v(x):=V(x,0)\quad \forall x\in\mathbb{R}.
$$
Note that $V$ would be recovered uniquely from $v$ as the solution of
\begin{equation}\label{eq2.4}
\left\{\begin{array}{llll}
{\Delta V=0} &~in~{\mathcal{R}_{h}},\\
{V(x,0)=v(x)} & ~{x\in\mathbb{R}},\\
{V(x,-h)=0} &~{x\in\mathbb{R}}.\\
\end{array}\right.
\end{equation}
According to the Cauchy-Riemann formula, $U$ would also be determined by $v$. In addition, it follows from (\ref{eq2.4}) that (see \cite[Appendix A]{ConstantinSV} for more details)
\begin{equation}\label{eq2.5}
V_{y}(x,0)=\mathcal{G}_{h}(v)(x)
\end{equation}
for all $x\in \mathbb{R}$, where $w\mapsto\mathcal{G}_{h}(w)$ denotes the periodic Dirichlet-Neumann operator for the strip.

Since $\Omega$ is $2\pi$-periodic strip-like domain of class $C^{2,\alpha}$, then classical elliptic theory ensures that (\ref{eq1.6}) have a unique solution, which depends on $v$. If $\psi$ is the unique solution of (\ref{eq1.6}), we define $\xi: \mathcal{R}_{h}\rightarrow \mathbb{R}$ by
\begin{equation}\label{eq2.6}
\xi(x,y)=\psi(U(x,y),V(x,y)) \quad for \quad (x,y)\in\mathcal{R}_{h}.
\end{equation}
Based on the first formula of (\ref{eq1.6}) and the assumptions (\ref{eq1.8}), it's easy to deduce that
$$
(X,Y)\mapsto \psi(X,Y)-\frac{gA}{6}Y^3-\frac{C}{2}Y^2
$$
is a harmonic function in $\Omega$. Since harmonic functions are invariant under conformal mapping, it follows (\ref{eq2.6}) that
\begin{equation}\label{eq2.7}
\xi(x,y)-\frac{gA}{6}V^3-\frac{\beta}{2}V^2
\end{equation}
is harmonic in $\mathcal{R}_{h}$. Besides, it follows from the second and third formulas in (\ref{eq1.6}) that
\begin{equation}\label{eq2.8}
\left\{\begin{array}{ll}
{\xi(x,0)=0},\\
{\xi(x,-h)=-m},\\
\end{array}\right.
\end{equation}
By using the chain rule and Cauchy-Riemann equations, we easily obtain that
\begin{equation}\label{eq2.9}
\xi_{x}^2+\xi_{y}^2=\left( \psi_{X}^2(U,V)+ \psi_{Y}^2(U,V)\right)\left( V_{x}^2+ V_{y}^2\right) ~~\text{in} ~~\mathcal{R}_{h},
\end{equation}
Let $\eta: \mathcal{R}_{h}\rightarrow \mathbb{R}$ be given by
$$
\eta=\xi-\frac{gA}{6}V^3-\frac{\beta}{2}V^2+m,
$$
then it follows from (\ref{eq2.7})-(\ref{eq2.8}) that
\begin{equation}\label{eq2.10}
\left\{\begin{array}{llll}
{\Delta \eta=0} &~in~{\mathcal{R}_{h}},\\
{\eta(x,0)=m-\frac{gA}{6}v^3(x)-\frac{\beta}{2}v^2(x)} & ~{x\in\mathbb{R}},\\
{\eta(x,-h)=0} &~{x\in\mathbb{R}},\\
\end{array}\right.
\end{equation}
Similar as (\ref{eq2.5}), the system (\ref{eq2.10}) would give that
\begin{equation}\label{eq2.11}
\eta_{y}(x,0)=\mathcal{G}_{h}\left(m-\frac{gA}{6}v^3-\frac{\beta}{2}v^2\right)(x)
\end{equation}
for all $x\in \mathbb{R}$ where $w\mapsto\mathcal{G}_{h}(w)$ denotes the periodic Dirichlet-Neumann operator for the strip.

On the other hand, we can deduce that the interface equation (\ref{eq1.7}) on $y=0$ would be reduced as
\begin{equation}\label{eq2.12}
\frac{\left(\eta_{y}+\frac{gA}{2}V^{2}V_{y}+\beta VV_{y}\right)^2}{V_{x}^2+V_{y}^2}=Q-2gBV+2\sigma\mathcal{K}
\end{equation}
by using (\ref{eq2.9}).
Since
$$
\mathcal{G}_{d}(u)=\frac{[u]}{d}+(\mathcal{C}(u-[u]))'=\frac{[u]}{d}+\mathcal{C}_{d}(u')
$$
and the periodic Hilbert transform for the strip is given by
$$
(\mathcal{C}_{d}(u))(x)=\sum_{n=1}^{\infty}a_{n}\coth(nd)\sin(nx)-\sum_{n=1}^{\infty}b_{n}\coth(nd)\cos(nx)
$$
for all $u\in L^{2}_{2\pi}$ with [u]=0, where $u$ has the Fourier series expansion
$$
u(x)=\sum^{\infty}_{n=1}a_{n}\cos(nx)+\sum^{\infty}_{n=1}b_{n}\sin(nx),
$$
then $(\ref{eq2.5})$ and $(\ref{eq2.11})$ would become
\begin{equation}\label{eq2.13}
\left\{\begin{array}{ll}
{V_{y}(x,0)=1+\mathcal{C}_{h}(v')},\\
{\eta_{y}(x,0)=\frac{m}{h}-\frac{gA[v^{3}]}{6h}-\frac{gA}{2}\mathcal{C}_{h}(v^{2}v')-\frac{\beta[v^2]}{2h}-\beta\mathcal{C}_{h}(vv')}.
\end{array}\right.
\end{equation}
It follows from the first equation in (\ref{eq2.13}) and Cauchy-Riemann formula that
$$U_{x}(x,0)=V_{y}(x,0)=1+\mathcal{C}_{h}(v'),$$
which means that $U(x,0)=x+\mathcal{C}_{h}(v-h)(x)$ up to a constant. Thus, the curvature $\mathcal{K}$ of the free surface $\mathcal{S}$ can be shown as
\begin{equation}\label{eq2.14}
\mathcal{K}=\frac{U'(x,0)V''(x,0)-V'(x,0)U''(x,0)}{(U'^{2}(x,0)+V'^{2}(x,0))^{\frac{3}{2}}}=
\frac{v''+\mathcal{C}_{h}(v')v''-v'\mathcal{C}_{h}(v'')}{((1+\mathcal{C}_{h}(v'))^2+v'^{2})^{\frac{3}{2}}}.
\end{equation}

Taking (\ref{eq2.13}) and (\ref{eq2.14}) into (\ref{eq2.12}), we finally reduce the problem (\ref{eq1.6})-(\ref{eq1.7}) to the following quasilinear pseudodifferential equation on the function $v\in C^{2,\alpha}_{2\pi}(\mathbb{R})$ which satisfy
\begin{eqnarray}\label{eq2.15}
& & \left( \frac{m}{h}-\frac{gA[v^3]}{6h}-\frac{gA}{2}\mathcal{C}_{h}(v^2v')-\frac{\beta[v^2]}{2h}-\beta\mathcal{C}_{h}(vv')+\frac{gA}{2}v^2(1+\mathcal{C}_{h}(v'))+\beta v (1+\mathcal{C}_{h}(v')) \right)^2  \nonumber\\
& &=(Q-2gBv)\left( v'^2+(1+\mathcal{C}_{h}(v'))^2\right)+2\sigma\frac{v''+\mathcal{C}_{h}(v')v''-v'\mathcal{C}_{h}(v'')}{((1+\mathcal{C}_{h}(v'))^2+v'^{2})^{\frac{1}{2}}}
\end{eqnarray}
with
\begin{equation*}
\left\{\begin{array}{llll}
{[v]=h},\\
{v(x)>0,~~\text{for all}~~ x\in\mathbb{R}},\\
{\text{the mapping}~~ x\mapsto( x+\mathcal{C}_{h}(v-h)(x),v(x))~~ \text{is injective on}~~ \mathbb{R}},\\
{v'(x)^2+(1+\mathcal{C}_{h}(v')(x))^2\neq0,~~ \text{for all}~~ x\in\mathbb{R}},\\
\end{array}\right.
\end{equation*}
where $C^{k,\alpha}_{2\pi}(\mathbb{R})$ denotes the space of $2\pi$-periodic functions whose partial derivatives up to order $k$ are H\"{o}lder continuous with exponent $\alpha$ over their domain of definition.



\section{Local bifurcation from a simple eigenvalue and stability}
In this section, we will first show the existence of solutions $(m,Q,v)$ of (\ref{eq2.15}) in the space $\mathbb{R}\times\mathbb{R}\times C^{2,\alpha}_{2\pi,e}(\mathbb{R})$ with $[v]=h$, where
$$
C^{2,\alpha}_{2\pi,e}(\mathbb{R}):=\{f\in C^{2,\alpha}_{2\pi}(\mathbb{R}): f(x)=f(-x), \forall x\in \mathbb{R}\}
$$
by using the remarkable Crandall-Rabnowitz local bifurcation theorem in \cite{CrandallR}. Furthermore, we will conclude the stability of these local solutions curves via the exchange of stability theorem due to Crandall-Rabnowitz \cite{CrandallR1}. We stated the theorems in appendix where we also explained the concept of stability.
\subsection{\bf The existence}

At this time, we only need to verify the assumptions $(H1)$ and $(H2)$ in Theorem \ref{thm6.1}.
Given $v\in C^{2,\alpha}_{2\pi,e}(\mathbb{R})$ with $[v]=h$, it is natural to set
$$
v=w+h.
$$
Then we would have a family of trivial solutions to (\ref{eq2.15}) for which $v=h$ (i.e $w=0$) if and only if $Q$ and $m$ are related by
$$
Q=2gBh+\left( \frac{m}{h}+\frac{\beta h}{2}+\frac{gAh^2}{3} \right)^2
$$
for $m\in \mathbb{R}$.
For convenience, we introduce the parameters
$$
\lambda:=\frac{m}{h}+\frac{\beta h}{2}+\frac{gAh^2}{3},\quad \mu:=Q-2gBh-\lambda^2.
$$
Then we can rewrite the problem (\ref{eq2.15}) as the following bifurcation problem
\begin{eqnarray}\label{eq3.1}
& & F(\lambda,(\mu,w))  \\ \nonumber
& & :=\left( \lambda-\beta\left( \frac{[w^2]}{2h}-w+\mathcal{C}_{h}(ww')-w\mathcal{C}_{h}(w') \right) \right.\\ \nonumber
& &\left.-\frac{gA}{2}\left(\frac{[w^3]}{3h}-[w^2]-w^2-2hw+\mathcal{C}_{h}(w^2w')-w^2\mathcal{C}_{h}(w')+2h\mathcal{C}_{h}(ww')-2hw\mathcal{C}_{h}(w') \right) \right)^2 \\ \nonumber
& &-\left( \lambda^2+\mu-2gBw \right)\left(w'^{2}+\left(1+\mathcal{C}_{h}(w')\right)^2\right)-2\sigma\frac{w''+\mathcal{C}_{h}(w')w''
-w'\mathcal{C}_{h}(w'')}{((1+\mathcal{C}_{h}(w'))^2+w'^{2})^{\frac{1}{2}}}=0
\end{eqnarray}
for $\lambda,\mu\in \mathbb{R}$ and $w\in C^{2,\alpha}_{2\pi}(\mathbb{R})$ and it is obvious that
$$F(\lambda,(0,0))=0,$$
which ensures (H1) in Theorem \ref{thm6.1} holds naturally. In fact, this family represents a curve
$$
\mathcal{K}_{triv}=\{ \left(\lambda, (0,0)\right) : \lambda\in \mathbb{R} \}
$$
in the space $\mathbb{R}\times\mathbb{R}\times C^{2,\alpha}_{2\pi,e}(\mathbb{R})$. These solutions are the laminar flows in the fluid domain, which is bounded below by $\mathcal{B}$ and is bounded above by $Y\equiv h$ with streamfunction
\begin{eqnarray}\label{eq3.2}
\psi(X,Y)=\frac{\beta}{2}Y^2+\frac{gA}{6}Y^3+\left( \frac{m}{h}-\frac{\beta h}{2}-\frac{gAh^2}{6} \right)Y-m.
\end{eqnarray}

In the following, it remains for us to verify (H2) in Theorem 5 by choosing the Banach spaces
$$
X:=C^{2,\alpha}_{2\pi,e}(\mathbb{R}),\quad Y:=C^{0,\alpha}_{2\pi,e}(\mathbb{R}).
$$
It follows from (\ref{eq3.1}) that the mapping
$$
F: \mathbb{R}\times\mathbb{R}\times  X\rightarrow Y
$$
is real-analytic. Then for $(\nu, \phi)\in \mathbb{R}\times X$ we can compute that
\begin{eqnarray}\label{eq3.3}
~&~&F_{(\mu,w)}(\lambda, (0,0))(\nu, \phi)=\nu F_{\mu}(\lambda,(0,0))+F_{w}(\lambda,(0,0))\phi\nonumber \\
&=&-\nu+2\lambda^2\mathcal{C}_{h}(\phi')-2\left(\beta\lambda+gAh\lambda+gB\right)\phi-2\sigma\phi''.
\end{eqnarray}
Since the function $\phi$ is even, $2\pi$ periodic and has zero average, then we expand it into the following Fourier series
\begin{eqnarray}\label{eq3.4}
\phi=\sum_{n=1}^{\infty}a_n\cos(nx).
\end{eqnarray}
Taking (\ref{eq3.4}) into (\ref{eq3.3}), we can obtain that
\begin{equation}\label{eq3.5}
F_{(\mu,w)}(\lambda,(0,0))\left(\nu, \sum_{n=1}^{\infty}a_{n}\cos(nx)\right)=-\nu+\sum_{n=1}^{\infty} D(n,\lambda) a_{n}\cos(nx),
\end{equation}
whereby for $T_n:=\frac{\tanh(nh)}{n}$
$$
D(n,\lambda)=-2\left( \frac{\lambda^2}{T_n}-\beta\lambda-gAh\lambda-(gB+\sigma n^2) \right).
$$
Indeed, $D(n,\lambda)$ is called dispersion relation. Once the dispersion relation
vanishes, which means the linearized operator $F_{(\mu,w)}(\lambda,(0,0))$ is degenerate and
the nontrivial solutions may occur. That is to say, the bifurcation points $\lambda$ may  be
\begin{equation}\label{eq3.6}
\lambda_{n,\pm}^*:=\frac{\beta+Agh}{2}T_n\pm\sqrt{\frac{(\beta+Agh)^2}{4}T_{n}^2+(gB+\sigma n^2)T_{n}}
\end{equation}
for any integer $n\geq 1$.
From (\ref{eq3.6}), it is easy to see that
$$
\lambda_{n,+}^*>0 \quad\text{and} \quad \lambda_{n,-}^*<0.
$$

Now we claim that (H2) holds for every $\lambda_*\in\{\lambda_{n,\pm}^*: n\in \mathbb{N}\}$. For any such $\lambda_*$, it is not difficult to find that $\mathcal{N}(F_{(\mu,w)}(\lambda,(0,0)))$ is one-dimensional and generated by $(0,w^*)\in \mathbb{R}\times X$, where
$$
w^*=\cos(nx) \quad \text{for} \quad x\in\mathbb{R}.
$$
Note that the range $\mathcal{R}(F_{(\mu,w)}(\lambda,(0,0)))$ is the closed subset of $Y$ consisting of the elements $\varphi$ with
$$
\int_{-\pi}^{\pi}\varphi\cos(nx)dx=0.
$$
Thus, the co-range $Y\setminus \mathcal{R}(F_{(\mu,w)}(\lambda,(0,0)))$ is generated by $\{w^*\}$ which is also one-dimensional.
Finally, we can check easily that the tranversality condition holds by computing that
$$
F_{\lambda(\mu,w)}(\lambda_*, (0,0))(1,(0,w^*))= -2\left(\frac{2\lambda_*}{T_n}-\beta-Agh\right)w^*\notin \mathcal{R}(F_{(\mu,w)}(\lambda,(0,0)))
$$
due to $\left(\frac{2\lambda_*}{T_n}-\beta-Agh\right)\neq 0$ by observing (\ref{eq3.6}).
Therefore, we can obtain the following result by applying the Theorem \ref{thm6.1} in Appendix.

\begin{theorem} \label{thm3.1} (Local bifurcation from one-dimensional kernel)
For Bernoulli's function $\beta\in \mathbb{R}$, let $\lambda_{n,\pm}^*$ be shown as in (\ref{eq3.6}). For any $\lambda\in \mathbb{R}\backslash \left\{\lambda_{n,\pm}^*:n\in \mathbb{N}^+\right\}$, the
solutions of (\ref{eq3.1}) are trivial. For $\lambda\in \lambda_{n,\pm}^*$ and each choice of sign $\pm$, there exists in the space $\mathbb{R}\times X$ a
continuous solutions curve
$$\mathcal{K}_{n,\pm} = \left\{\left(\lambda_{n,\pm}(s), (\mu(s),w(s))\right) : s\in \mathbb{R}\right\}$$
of (\ref{eq3.1}) satisfying

(i) $\left(\lambda_{n,\pm}(0), (\mu(0),w(0))\right)=\left(\lambda_{n,\pm}^*, (0,0)\right)$;

(ii) $w(s)= sw^*+o(s)$ for $0\leq|s|<\varepsilon$, where $w^*=\cos(nx)$ and $\varepsilon>0$ sufficiently
small;

(iii) there exist a neighbourhood $\mathcal{U}_{n,\pm}$ of $\left(\lambda_{n,\pm}^*, (0, 0)\right)$ in $\mathbb{R}\times X$ and
$\varepsilon>0$ sufficiently small such that
\begin{equation}
\left\{\left(\lambda, (\mu,w)\right)\in \mathcal{U}_{n,\pm} :  F(\lambda,(\mu,w))=0\right\} = \left\{\left(\lambda_{n,\pm}(s), (\mu(s),w(s))\right) : 0 \leq |s| < \varepsilon\right\}.\nonumber
\end{equation}
\end{theorem}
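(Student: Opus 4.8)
The plan is to obtain the theorem as a direct application of the Crandall--Rabinowitz theorem (Theorem \ref{thm6.1}), since the bulk of its hypotheses have already been checked in the computations leading up to the statement. What remains is to assemble these pieces and to supply the two structural ingredients that the Crandall--Rabinowitz machinery requires but which were not made fully explicit above: the real-analyticity of $F$ between the chosen H\"{o}lder spaces, and the Fredholm index-zero property of the linearized operator $L_\lambda:=F_{(\mu,w)}(\lambda,(0,0))$.

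First I would establish the regularity of $F\colon\mathbb{R}\times\mathbb{R}\times X\to Y$. The only potentially singular ingredient in (\ref{eq3.1}) is the surface-tension term, whose denominator is $((1+\mathcal{C}_h(w'))^2+w'^2)^{1/2}$; near $w=0$ this quantity is close to $1$, so on a neighbourhood of the trivial curve $\mathcal{K}_{triv}$ the denominator stays bounded away from zero, and $F$ is then a composition of the bounded linear Hilbert-transform-type operator $\mathcal{C}_h$, polynomial nonlinearities, and a smooth scalar function, hence real-analytic from $X$ into $Y$. This legitimizes the formula (\ref{eq3.3}) for $L_\lambda$ and, in particular, the role of the parameter $\mu$, whose partial derivative $F_\mu=-1$ is what places the constant Fourier mode always in the range.

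Next I would record the Fredholm structure. From (\ref{eq3.5}) the operator $L_\lambda$ acts diagonally on the Fourier modes with multipliers $D(n,\lambda)$, and since $T_n=\tanh(nh)/n$ one has $D(n,\lambda)=2\sigma n^2+O(n)\to+\infty$ as $n\to\infty$. Thus $L_\lambda$ differs from the invertible second-order operator $(\nu,\phi)\mapsto(-\nu,-2\sigma\phi'')$ by a relatively compact perturbation, so $L_\lambda$ is Fredholm of index zero as a map $\mathbb{R}\times X\to Y$. With this in hand the two regimes separate cleanly. For $\lambda\notin\{\lambda_{n,\pm}^*\}$ every multiplier $D(n,\lambda)$ is nonzero, so $\mathcal{N}(L_\lambda)=\{0\}$ and, by index zero, $L_\lambda$ is an isomorphism; the implicit function theorem then forces the local solution set to coincide with $\mathcal{K}_{triv}$, which gives the triviality assertion. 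For $\lambda=\lambda_{n,\pm}^*$ the computations already show that $(0,\cos(nx))$ spans $\mathcal{N}(L_\lambda)$, that the co-range is one-dimensional, and that the transversality quantity $F_{\lambda(\mu,w)}(\lambda_*,(0,0))(1,(0,w^*))$ is a nonzero multiple of $w^*$ and hence leaves $\mathcal{R}(L_\lambda)$; applying Theorem \ref{thm6.1} then produces the curve $\mathcal{K}_{n,\pm}$ with the asserted properties (i)--(iii).

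The one step I expect to require genuine care --- and which is the main obstacle --- is verifying that the kernel is \emph{exactly} one-dimensional at $\lambda_{n,\pm}^*$, i.e.\ that no other mode $m\neq n$ satisfies $D(m,\lambda_{n,\pm}^*)=0$. For a fixed $n$ the two roots $\lambda_{n,+}^*$ and $\lambda_{n,-}^*$ are always distinct because $(gB+\sigma n^2)T_n>0$, so the only danger is a resonance between distinct modes. This non-resonance (simplicity of the eigenvalue) condition can fail for special values of $\beta$, and precisely those resonant configurations generate the two-dimensional kernel analysed in Section 4; accordingly, Theorem \ref{thm3.1} is to be read under the complementary generic assumption that the critical value $\lambda_{n,\pm}^*$ is attained by a single Fourier mode.
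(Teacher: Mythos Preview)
Your proposal is correct and follows the same route as the paper: verify hypotheses (H1)--(H2) of the Crandall--Rabinowitz theorem (Theorem~\ref{thm6.1}) and apply it. You supply additional rigor the paper omits---an explicit Fredholm index-zero argument via the asymptotics $D(n,\lambda)\sim 2\sigma n^2$, the implicit-function-theorem justification of local triviality away from $\{\lambda_{n,\pm}^*\}$, and the non-resonance caveat needed for one-dimensionality of the kernel (the paper simply asserts one-dimensionality at every $\lambda_{n,\pm}^*$, though its own Section~4 shows this fails at the resonant values $\beta=\beta_\pm^*$)---but the underlying strategy is identical.
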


\subsection{\bf The formal stability}
Inspired by \cite{ConstantinS1}, we will establish the following formal stability in the linearized sense of the curves of solutions $\mathcal{K}_{n,\pm}$ obtained in Theorem \ref{thm3.1}. We would like to mention that the concept of formal stability has been given in Appendix.

\begin{theorem} \label{thm4.1} (The stability of local bifurcation curve)\\
(i) For the bifurcation branch $(\lambda(s),w(s))$ of (\ref{eq3.1}) emanating from $\lambda_{n,\pm}^*$, we have that
$$\lambda_{n,\pm}'(0)=0$$
 and there exists a small positive constant $\varepsilon$ such that when $\beta,\sigma\in[-\varepsilon,+\varepsilon]$ and $A\geq 0$, there holds that
\begin{eqnarray}
\lambda_{n,+}''(0)>0,\nonumber
\end{eqnarray}
and when $\beta,\sigma\in[-\varepsilon,+\varepsilon]$ and $A\leq 0$, there holds that
\begin{eqnarray}
\lambda_{n,-}''(0)<0.\nonumber
\end{eqnarray}
(ii) As $\beta,\sigma\in[-\varepsilon,+\varepsilon]$ and $A\geq 0$, near the bifurcation point $\lambda_{n,+}^*$, the laminar solution is stable formally for $\lambda>\lambda_{n,+}$ but unstable for $\lambda<\lambda_{n,+}^*$. As $\beta,\sigma\in[-\varepsilon,+\varepsilon]$ and $A\leq 0$, near the bifurcation point $\lambda_{n,-}^*$, the laminar solution is unstable for $\lambda>\lambda_{n,-}^*$ but stable formally for $\lambda<\lambda_{n,-}^*$.\\
(iii) As $\beta,\sigma\in[-\varepsilon,+\varepsilon]$ and $A\geq 0$, the nontrivial solutions are unstable near the bifurcation points $\lambda_{n,+}^*$ and as
 $\beta,\sigma\in[-\varepsilon,+\varepsilon]$ and $A\leq 0$, the nontrivial solutions are unstable near the bifurcation points $\lambda_{n,-}^*$. (see Fig.3)
\end{theorem}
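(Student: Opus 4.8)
I would handle all three assertions through a single Lyapunov--Schmidt-type expansion of the branch $\mathcal{K}_{n,\pm}$ from Theorem \ref{thm3.1}, combined with the Crandall--Rabinowitz exchange-of-stability principle recalled in the Appendix. Write the kernel generator as $\kappa:=(0,w^*)$ with $w^*=\cos(nx)$, let $L:=F_{(\mu,w)}(\lambda_*,(0,0))$, and let $\ell(\varphi):=\int_{-\pi}^{\pi}\varphi\cos(nx)\,dx$ be the functional spanning the annihilator of $\mathcal{R}(L)$. I expand
\begin{equation*}
\lambda(s)=\lambda_*+\lambda_1 s+\lambda_2 s^2+O(s^3),\qquad (\mu(s),w(s))=s\,\kappa+s^2(\mu_2,w_2)+O(s^3),
\end{equation*}
insert this into $F(\lambda(s),(\mu(s),w(s)))=0$ and collect powers of $s$. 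Because $F(\lambda,(0,0))\equiv0$, every pure $\lambda$-derivative of $F$ at the trivial state vanishes, so the order-$s$ identity is automatic, while the order-$s^2$ identity, after applying $\ell$, reads $\lambda_1\,\ell(L_\lambda\kappa)+\tfrac12\ell(Q[\kappa,\kappa])=0$, where $Q:=F_{(\mu,w)(\mu,w)}(\lambda_*,(0,0))$, $L_\lambda:=F_{\lambda(\mu,w)}(\lambda_*,(0,0))$.

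The first assertion $\lambda'(0)=\lambda_1=0$ then follows from the structure of the quadratic form: every quadratic term in (\ref{eq3.1}) evaluated on $w^*=\cos(nx)$ (the products $w^2$, the $vv'$-type terms, and $\mathcal{C}_{h}(ww')$, $w\mathcal{C}_{h}(w')$) produces only the Fourier modes $0$ and $2n$, so $\ell(Q[\kappa,\kappa])=0$; since $\ell(L_\lambda\kappa)\neq0$ by the transversality already checked before Theorem \ref{thm3.1}, we get $\lambda_1=0$. With $\lambda_1=0$ the order-$s^2$ equation reduces to $L(\mu_2,w_2)=-\tfrac12 Q[\kappa,\kappa]$, whose mode-$0$ part fixes $\mu_2$, whose mode-$2n$ part fixes $w_2$ as a multiple of $\cos(2nx)$ (so that $[w]=0$ is preserved), and whose mode-$n$ part vanishes by the computation above. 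Carrying the expansion to order $s^3$ and applying $\ell$ gives the closed formula
\begin{equation*}
\lambda_2=-\frac{\ell\big(Q[\kappa,(\mu_2,w_2)]\big)+\tfrac16\ell\big(C[\kappa,\kappa,\kappa]\big)}{\ell(L_\lambda\kappa)},\qquad \lambda''(0)=2\lambda_2,
\end{equation*}
with $C:=F_{(\mu,w)(\mu,w)(\mu,w)}(\lambda_*,(0,0))$; here the cross term does carry a nonzero mode-$n$ part, since $(\mu_2,w_2)$ feeds back modes $0$ and $2n$ (note $\mathcal{C}_{h}((\cos nx)')=n\coth(nh)\cos(nx)$, so $\mu_2$ also enters the mode-$n$ balance through $F_{\mu w}$).

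I expect the main obstacle to lie in evaluating this formula and extracting the sign of $\lambda''(0)$. Using the spectral action of the periodic Hilbert transform $\mathcal{C}_{h}$ on $\cos(m\cdot)$ and $\sin(m\cdot)$ one computes $(\mu_2,w_2)$ explicitly and then the mode-$n$ coefficients of $Q[\kappa,(\mu_2,w_2)]$ and $C[\kappa,\kappa,\kappa]$; the quasilinear squared bracket in (\ref{eq3.1}) and the capillary term generate many contributions whose aggregate sign is not visible termwise. The strategy is to first set $\beta=\sigma=0$, where $\lambda_{n,+}^*>0$ and $\lambda_{n,-}^*<0$, to verify that the stratification and gravity contributions force $\lambda_{n,+}''(0)>0$ for $A\geq0$ (respectively $\lambda_{n,-}''(0)<0$ for $A\leq0$), and then to propagate the strict inequality to all $\beta,\sigma\in[-\varepsilon,\varepsilon]$ by continuity of $\lambda''(0)$ in the parameters for $\varepsilon$ small.

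Finally, parts (ii) and (iii) are read off from the eigenvalue $\xi(\lambda):=D(n,\lambda)$ together with the sign of $\lambda''(0)$. Differentiating (\ref{eq3.6}) gives
\begin{equation*}
\xi'(\lambda_{n,+}^*)=-2\Big(\tfrac{2\lambda_{n,+}^*}{T_n}-\beta-Agh\Big)<0,\qquad \xi'(\lambda_{n,-}^*)=-2\Big(\tfrac{2\lambda_{n,-}^*}{T_n}-\beta-Agh\Big)>0,
\end{equation*}
so along the laminar branch the critical eigenvalue passes from positive to negative as $\lambda$ increases through $\lambda_{n,+}^*$ and from negative to positive through $\lambda_{n,-}^*$; with the formal-stability convention of the Appendix this yields (ii). For (iii) I invoke the exchange-of-stability theorem: since $\lambda'(0)=0$, the perturbed eigenvalue along the bifurcating branch satisfies $\gamma(s)=-\lambda''(0)\,\xi'(\lambda_*)\,s^2+o(s^2)$. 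For $A\geq0$ near $\lambda_{n,+}^*$ one has $\lambda''(0)>0$ and $\xi'(\lambda_{n,+}^*)<0$, hence $\gamma(s)>0$; for $A\leq0$ near $\lambda_{n,-}^*$ one has $\lambda''(0)<0$ and $\xi'(\lambda_{n,-}^*)>0$, hence again $\gamma(s)>0$. Thus in both regimes the nontrivial solutions carry a positive eigenvalue and are unstable, which establishes (iii).
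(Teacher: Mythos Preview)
Your overall architecture is the paper's: a Lyapunov--Schmidt-type expansion to get $\lambda'(0)=0$ and a formula for $\lambda''(0)$, followed by the Crandall--Rabinowitz exchange-of-stability theorem for (ii)--(iii). Your treatment of (ii) and (iii) --- computing the sign of $\xi'(\lambda_{n,\pm}^*)=D_\lambda(n,\lambda_{n,\pm}^*)$ from (\ref{eq3.6}) and reading off the sign of the branch eigenvalue from $\gamma(s)\sim -\lambda''(0)\,\xi'(\lambda_*)\,s^2$ --- is exactly what the paper does, down to the same inequalities.

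The one substantive difference is in part (i). The paper does not expand by hand; it invokes the bifurcation-direction formula of Theorem \ref{thm6.2}, which, under the stated hypothesis $F_{(\mu,w)^2}(\lambda_*,(0,0))[\kappa,\kappa]=0$, expresses $\lambda''(0)$ purely through the cubic term $\langle\ell,F_{(\mu,w)^3}[\kappa,\kappa,\kappa]\rangle$ with no quadratic cross-term. The paper then writes that cubic out explicitly (their (3.10)--(3.11)), specialises to $\beta=\sigma=0$, and reads off the sign before appealing to continuity --- the same endgame you propose. You instead retain the full reduced-equation formula with the cross-term $\ell\big(Q[\kappa,(\mu_2,w_2)]\big)$, and you are right that this term is not obviously absent here: $F_{(\mu,w)^2}[\kappa,\kappa]$ carries nonzero modes $0$ and $2n$ (see (\ref{eq3.8})), so it is only \emph{orthogonal} to mode $n$, not zero; moreover $F_{\mu w}(\lambda_*,(0,0))\phi=-2\mathcal{C}_h(\phi')$ is nonzero, so $\mu_2$ feeds back into mode $n$ exactly as you note. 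Your bookkeeping is therefore the more careful of the two; the price is that you must actually solve for $(\mu_2,w_2)$ and evaluate two projected quantities before the $\beta=\sigma=0$ specialisation, whereas the paper's shortcut reduces the sign question to a single explicit expression but leans on a hypothesis of Theorem \ref{thm6.2} that is not literally satisfied in this problem.
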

\begin{figure}[ht]
\centering
\includegraphics[width=0.75\textwidth]{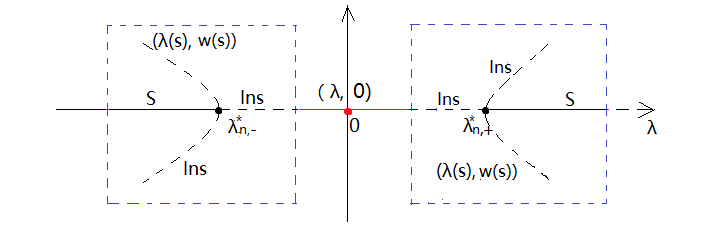}
\centering
\caption{The local stability near bifurcation points $\lambda_{n,\pm}^*$.}
\end{figure}
\begin{proof}
Let us first use the Theorem \ref{thm6.2} to prove the bifurcation direction (i).
For any $w\in C^{2,\alpha}_{2\pi,e}(\mathbb{R})$, define
\begin{eqnarray}
\widetilde{l}(w)=\frac{1}{\pi}\int_{-\pi}^{\pi} w\cos(nx) dx.\nonumber
\end{eqnarray}
Then $\widetilde{l}$ is a linear functional on $C^{2,\alpha}_{2\pi,e}(\mathbb{R})$ such that $\widetilde{l}\left(\cos(nx)\right)=1$.
It is clearly $\mathcal{R}(F_{(\mu,w)}\left(\lambda^*_{n,\pm},\beta,(0,0)\right))=\left\{u:\widetilde{l}(u)=0\right\}$.
Let $w^*=\cos(nx)$, then it follows from (\ref{eq3.5}) and (\ref{eq3.6}) that
\begin{equation}\label{eq3.7}
\left\langle \widetilde{l}, F_{\lambda (\mu,w)}\left(\lambda_{n,\pm}^*,\beta,(0,0)\right)(1,(0,w^*))\right\rangle=\mp2\sqrt{(\beta+Agh)^2+\frac{4(gB+\sigma n^2)}{T_n}}.
\end{equation}
For any $\lambda_*\in \{\lambda^*_{n,\pm}\}$, it follows from (\ref{eq3.1}) that
\begin{eqnarray}\label{eq3.8}
&~&F_{(\mu,w)^2}(\lambda_*,\beta,(0,0))((0,w^*),(0,w^*)) \nonumber \\
&=&2\left(\beta+gAh\right)^2w^{*2}-2\lambda_*\beta\left( \frac{[w^{*2}]}{h}+2\mathcal{C}_{h}(w^*w^{*'}) -2w^*\mathcal{C}_{h}(w^{*'}) \right) \nonumber \\
&+&\lambda_*gA\left( 2[w^{*2}]+2w^{*2}+4hw^{*}\mathcal{C}_{h}(w^{*'})-4h\mathcal{C}_{h}(w^*w^{*'}) \right)+8gBw^*\mathcal{C}_{h}(w^{*'}) \nonumber \\
&-&2\lambda_*^2\left( w^{*'2}+\mathcal{C}_{h}^{2}(w^{*'}) \right)+4\sigma w^{*'}\mathcal{C}_{h}(w^{*''})
\end{eqnarray}
which gives that
\begin{eqnarray}\label{eq3.9}
\left\langle \widetilde{l}, F_{(\mu,w)^2}(\lambda_*,\beta,(0,0))((0,w^*),(0,w^*))\right\rangle=0
\end{eqnarray}
by using the facts
$$
\mathcal{C}_{h}(\cos(nx))=\coth(nh)\sin(nx), ~~\mathcal{C}_{h}(\sin(nx))=-\coth(nh)\cos(nx)
$$
and
$$
\int^{\pi}_{-\pi} \cos^3(nx)dx=0, \quad \int^{\pi}_{-\pi} \sin^2(nx) \cos(nx)dx=0.
$$
Therefore, combining (\ref{eq3.7}) with (\ref{eq3.9}) and applying Theorem \ref{thm6.2}, we obtain that
\begin{equation}
\lambda_{n,\pm}'(0)=-\frac{\left\langle \widetilde{l}, F_{(\mu,w)^2}(\lambda_*,\beta,(0,0))((0,w^*),(0,w^*))\right\rangle}{2\left\langle \widetilde{l}, F_{\lambda (\mu,w)}\left(\lambda_*,\beta,(0,0)\right)(1,(0,w^*)\right\rangle} =0,\nonumber
\end{equation}
which indicates that the bifurcation is pitchfork bifurcation.

By a complex calculation, we further obtain that
\begin{eqnarray}\label{eq3.10}
&~&F_{(\mu,w)^3}(\lambda_*,\beta,(0,0))((0,w^*),(0,w^*),(0,w^*)) \nonumber \\
&=&6\beta(\beta+gAh)w^*\left( 2w^*\mathcal{C}_{h}(w^{*'})-2\mathcal{C}_{h}(w^*w^{*'})-\frac{[w^{*2}]}{h} \right) \nonumber \\
&+&6gA(\beta+gAh)w^*\left( [w^{*2}]+w^{*2}-2h\mathcal{C}_{h}(w^*w^{*'})+2hw^*\mathcal{C}_{h}(w^{*'}) \right) \nonumber \\
&-& 2\lambda_*gA\left( \frac{[w^{*3}]}{h}+3\mathcal{C}_{h}(w^{*2}w^{*'})-3w^{*2}\mathcal{C}_{h}(w^{*'}) \right)+12gBw^*\left(w^{*'2}+\mathcal{C}_{h}^2(w^{*'})\right) \nonumber \\
&-&2\sigma\left( 10w^{*'}\mathcal{C}_{h}(w^{*'})\mathcal{C}_{h}(w^{*''})-3w^{*''}\mathcal{C}_{h}^2(w^{*'})+w^{*''}-w^{*''}w^{*'2}\right).\nonumber \\
\end{eqnarray}
Based on (\ref{eq3.10}), we obtain that
\begin{eqnarray}\label{eq3.11}
&~&\left\langle \widetilde{l},F_{(\mu,w)^3}(\lambda_*,\beta,(0,0))((0,w^*),(0,w^*),(0,w^*))\right\rangle  \nonumber \\
&=& 3\beta(\beta+gAh)\left( 3n\coth(nh)-n\coth(2nh)-\frac{1}{h} \right) \nonumber \\
&+& 3gA(\beta+gAh)\left( \frac{5}{2}+3hn\coth(nh)-hn\coth(2nh) \right) \nonumber \\
&+& 2\lambda_*gAn\coth(nh)+3gB\left( n^2+3n^2\coth^2(nh) \right) \nonumber \\
&-& \frac{\sigma}{2}\left( 19n^4\coth^2(nh)+n^4-4n^2 \right).
\end{eqnarray}
If $\beta=0$ and $\sigma=0$, it follows from (\ref{eq3.11}) that
\begin{eqnarray}\label{eq3.12}
&~&\left\langle \widetilde{l},F_{(\mu,w)^3}(\lambda_*,\beta,(0,0))((0,w^*),(0,w^*),(0,w^*))\right\rangle  \nonumber \\
&=& 3g^2A^2h\left( \frac{5}{2}+3hn\coth(nh)-hn\coth(2nh) \right) \nonumber \\
&+& 2\lambda_*Agn\coth(nh)+3gB\left( n^2+3n^2\coth^2(nh) \right), \nonumber
\end{eqnarray}
which implies that
$$
\left\langle \widetilde{l},F_{(\mu,w)^3}(\lambda_{n,+}^*,0,(0,0))((0,w^*),(0,w^*),(0,w^*))\right\rangle>0, \quad \text{for} \quad A\geq 0
$$
and
$$
\left\langle \widetilde{l},F_{(\mu,w)^3}(\lambda_{n,-}^*,0,(0,0))((0,w^*),(0,w^*),(0,w^*))\right\rangle>0, \quad \text{for} \quad A\leq 0.
$$

By continuity for small $\varepsilon$, if $\beta,\sigma\in[-\varepsilon,+\varepsilon]$ and $A\geq 0$, we have that
$$\left\langle \widetilde{l},F_{(\mu,w)^3}(\lambda_{n,+}^*,\beta,(0,0))((0,w^*),(0,w^*),(0,w^*))\right\rangle>0,$$
and if $\beta,\sigma\in[-\varepsilon,+\varepsilon]$ and $A\leq 0$, we have that
$$\left\langle \widetilde{l},F_{(\mu,w)^3}(\lambda_{n,-}^*,\beta,(0,0))((0,w^*),(0,w^*),(0,w^*))\right\rangle>0.$$

Therefore, as $\beta,\sigma\in[-\varepsilon,+\varepsilon]$ and $A\geq 0$, applying Theorem \ref{thm6.2} again, we have that
\begin{eqnarray}\label{eq3.12}
\lambda_{n,+}''(0)&=&\frac{\left\langle \widetilde{l},F_{(\mu,w)^3}(\lambda_{n,+}^*,\beta,(0,0))((0,w^*),(0,w^*),(0,w^*))\right\rangle}{-3\left\langle \widetilde{l}, F_{\lambda (\mu,w)}\left(\lambda_{n,+}^*,\beta,(0,0)\right)(1,(0,w^*))\right\rangle}
>0
\end{eqnarray}
and as $\beta,\sigma\in[-\varepsilon,+\varepsilon]$ and $A\leq 0$, we have that
\begin{eqnarray}\label{eq3.13}
\lambda_{n,-}''(0)&=&\frac{\left\langle \widetilde{l},F_{(\mu,w)^3}(\lambda_{n,-}^*,\beta,(0,0))((0,w^*),(0,w^*),(0,w^*))\right\rangle}{-3\left\langle \widetilde{l}, F_{\lambda (\mu,w)}\left(\lambda_{n,-}^*,\beta,(0,0)\right)(1,(0,w^*))\right\rangle}
<0,
\end{eqnarray}
which indicates that the bifurcation is supercritical at bifurcation point $\lambda_{n,+}^*$ and subcritical at bifurcation point $\lambda_{n,-}^*$, respectively.

In order to prove (ii) and (iii), we need to apply the exchange of stability Theorem \ref{thm6.3}. Based on the arguments above, we have known that $0$ is a simple eigenvalue of $T:=F_{(\mu,w)}\left(\lambda_{n,\pm}^*,\beta,(0,0)\right)$ and the transversal condition is satisfied.
Taking $K$ equals the identical operator, since $0$ is a simple eigenvalue of $T$, we see that $0$ is a $K$ simple eigenvalue of $T$.
So all assumptions of Theorem \ref{thm6.3} are satisfied.

Applying Theorem \ref{thm6.3} by letting $\psi^*=(0,\cos(nx))$, there are eigenvalues $\theta(s)$, $\kappa(\lambda_{n,\pm})\in \mathbb{R}$ with eigenvectors $(0,u(s))$,
$(0,\psi(\lambda_{n,\pm}))\in X $, such that
\begin{eqnarray}
F_{(\mu,w)}(\lambda_{n,\pm}(s),\beta,(\mu,w(s)))(0,u(s))=\theta_{\pm}(s)u(s),\nonumber
\end{eqnarray}
\begin{eqnarray}
F_{(\mu,w)}(\lambda_{n,\pm},\beta,(0,0))(0,\psi(\lambda_{n,\pm}))=\kappa(\lambda_{n,\pm})\psi(\lambda_{n,\pm})\nonumber
\end{eqnarray}
with
\begin{eqnarray} \label{eq3.14}
\theta(0)=\kappa\left(\lambda^*_{n,\pm}\right)=0,~~u(0)=\psi\left(\lambda^*_{n,\pm}\right)=\cos(nx)
\end{eqnarray}
and
\begin{eqnarray} \label{eq3.15}
\kappa'\left(\lambda^*_{n,\pm}\right)\neq 0,~~\lim_{s\rightarrow 0,\kappa(s)\neq 0}\frac{s\lambda_{n,\pm}'(s)}{\theta_{\pm}} =-\frac{1}{\kappa'\left(\lambda^*_{n,\pm}\right)}.
\end{eqnarray}

It follows from (\ref{eq3.5}) that
\begin{eqnarray}
F_(\mu,w)(\lambda_{n,\pm},\beta,(0,0))(0,\cos(nx))&=&D(n,\lambda_{n,\pm},\beta)\cos(nx).\nonumber
\end{eqnarray}
So we have that
\begin{eqnarray} \label{eq3.16}
\kappa'\left(\lambda^*_{n,\pm}\right)= -2\left( \frac{2\lambda^*_{n,\pm}}{T_n}-\beta-gAh \right).
\end{eqnarray}
When $n\geq1$, we deduce from (\ref{eq3.6}) that
\begin{eqnarray}
\kappa'\left(\lambda^*_{n,+}\right)= -2\left( \frac{2\lambda^*_{n,+}}{T_n}-\beta-gAh \right)=-2\sqrt{(\beta+Agh)^2+\frac{4}{T_n}\left( gB+\sigma n^2 \right)}<0 \nonumber
\end{eqnarray}
and
\begin{eqnarray}
\kappa'\left(\lambda^*_{n,-}\right)= -2\left( \frac{2\lambda^*_{n,-}}{T_n}-\beta-gAh \right)=
2\sqrt{(\beta+Agh)^2+\frac{4}{T_n}\left( gB+\sigma n^2 \right)}>0. \nonumber
\end{eqnarray}
Considering $\kappa\left(\lambda^*_{n,\pm}\right)=0$ in the (\ref{eq3.14}), then we can conclude that $\kappa\left(\lambda\right)<0$ if $\lambda>\lambda^*_{n,+}$ and $\kappa\left(\lambda\right)>0$ if $\lambda<\lambda^*_{n,+}$ for $\left\vert \lambda-\lambda^*_{n,+}\right\vert$ sufficiently small. Similarly, $\kappa\left(\lambda\right)>0$ if $\lambda>\lambda^*_{n,-}$ and $\kappa\left(\lambda\right)<0$ if $\lambda<\lambda^*_{n,-}$ for $\left\vert \lambda-\lambda^*_{n,-}\right\vert$ sufficiently small. This implies near the bifurcation point $\lambda^*_{n,+}$ that the laminar solution is stable formally for $\lambda>\lambda^*_{n,+}$ but unstable for $\lambda<\lambda^*_{n,+}$. Similarly, near the bifurcation point $\lambda^*_{n,-}$, the laminar solution is unstable for $\lambda>\lambda^*_{n,-}$ but stable formally for $\lambda<\lambda^*_{n,-}$.

On the other hand, by using (\ref{eq3.15}) and (\ref{eq3.16}), we have that
\begin{eqnarray}\label{eq3.17}
2\left( \frac{2\lambda^*_{n,\pm}}{T_n}-\beta-gAh \right)\lim_{s\rightarrow 0,\beta(s)\neq 0}\frac{s\lambda_{n,\pm}'(s)}{\theta_{\pm}(s)} =1.
\end{eqnarray}
It follows from (\ref{eq3.17}) that
\begin{eqnarray}\label{eq3.18}
\lim_{s\rightarrow 0,\beta(s)\neq 0}\frac{s\lambda_{n,+}'(s)}{\theta_{+}(s)} =\frac{1}{2\sqrt{(\beta+Agh)^2+\frac{4}{T_n}\left( gB+\sigma n^2 \right)}},
\end{eqnarray}
\begin{eqnarray}\label{eq3.19}
\lim_{s\rightarrow 0,\beta(s)\neq 0}\frac{s\lambda_{n,-}'(s)}{\theta_{-}(s)} =-\frac{1}{2\sqrt{(\beta+Agh)^2+\frac{4}{T_n}\left( gB+\sigma n^2 \right)}}.
\end{eqnarray}

We have proved that $\lambda_{n,\pm}'(0)=0$. Then by the Taylor expansion of $\lambda_{n,\pm}'(s)$ at $s=0$ we can write
\begin{eqnarray} \label{eq3.20}
\lambda_{n,\pm}'(s)=s \lambda_{n,\pm}''(0)+o\left(s\right).
\end{eqnarray}
Thus it follows (\ref{eq3.18})-(\ref{eq3.20}) that
\begin{eqnarray}
\lim_{s\rightarrow 0}\frac{s^2\lambda_{n,+}''(0)+o\left(s^2\right)}{\theta_{+}(s)}=\frac{1}{2\sqrt{(\beta+Agh)^2+\frac{4}{T_n}\left( gB+\sigma n^2 \right)}}>0\nonumber
\end{eqnarray}
and
\begin{eqnarray}
\lim_{s\rightarrow 0}\frac{s^2\lambda_{n,-}''(0)+o\left(s^2\right)}{\theta_{-}(s)}=-\frac{1}{2\sqrt{(\beta+Agh)^2+\frac{4}{T_n}\left( gB+\sigma n^2 \right)}}<0\nonumber
\end{eqnarray}
Furthermore, by using (\ref{eq3.12}) and (\ref{eq3.13}), we obtain that
\begin{eqnarray}
\theta_{+}(s)>0,\qquad \theta_{-}(s)>0.\nonumber
\end{eqnarray}
for $\left|s\right|$ ($\neq0$) small enough. This implies that the nontrivial solutions are unstable near the bifurcation points $\lambda_{n,\pm}^*$.
\end{proof}

\begin{remark}
It is worth noting that the stratified form (i.e. the sign of $A$) has an essential effect on local stability of two bifurcation curves $\mathcal{K}_{n,\pm}$. In particular, if taking $A=0$, then we obtain the local stability of capillary-gravity water waves. The local stability of gravity water waves also follows by further taking $\sigma=0$.
\end{remark}

\section{Local bifurcation from a double eigenvalue}
In section 3, the local bifurcation problem of (\ref{eq3.1}) was studied in the case when the kernel is one-dimensional. There we only choose $\lambda$ as the bifurcation parameter by fixing the Bernoulli's function $\beta$. In this section, we will regard $\beta$ as another parameter. Moreover, we find that if $(\beta+Agh)^2$ is sufficiently close to the constant $\beta_{m,n}$ defined by (\ref{eq4.2}), then the resonance phenomenon would occur.
That is to say, the kernel space of the linearized operator $F_{(\mu,w)}(\lambda_{n,\pm}^*, \beta^*_\pm, (0,0))$ is two-dimensional at this time.
Thus, the classical Crandall-Rabinowitz local bifurcation Theorem \ref{thm6.1} is not available.
For this, we would apply the local bifurcation theorem with multiple kernels and parameters due to Kielh\"{o}fer to obtain the two-model water waves.

Now let us pay attention to the following equation
\begin{eqnarray}\label{eq4.1}
& & F(\lambda,\beta,(\mu,w))  \\ \nonumber
& & :=\left( \lambda-\beta\left( \frac{[w^2]}{2h}-w+\mathcal{C}_{h}(ww')-w\mathcal{C}_{h}(w') \right) \right.\\ \nonumber
& &\left.-\frac{gA}{2}\left(\frac{[w^3]}{3h}-[w^2]-w^2-2hw+\mathcal{C}_{h}(w^2w')-w^2\mathcal{C}_{h}(w')+2h\mathcal{C}_{h}(ww')-2hw\mathcal{C}_{h}(w') \right) \right)^2 \\ \nonumber
& &-\left( \lambda^2+\mu-2gBw \right)\left(w'^{2}+\left(1+\mathcal{C}_{h}(w')\right)^2\right)-2\sigma\frac{w''+\mathcal{C}_{h}(w')w''
-w'\mathcal{C}_{h}(w'')}{((1+\mathcal{C}_{h}(w'))^2+w'^{2})^{\frac{1}{2}}}=0,
\end{eqnarray}
where we choose $\lambda,\beta$ as two parameters.  It is obvious that
$$F(\lambda,\beta,(0,0))=0$$
and the operator $F$ defined by (\ref{eq4.1}) is real-analytic from $ \mathbb{R}^3\times X\rightarrow Y$.
Based on the arguments in the previous section, we have that

\begin{lemma} \label{lem4.1}
For $(\lambda,\beta)\in \mathbb{R}^2$, the Frechet derivative $F_{(\mu,w)}(\lambda,\beta,(0,0)): \mathbb{R}\times X\rightarrow Y $ is a Fourier multiplier. Furthermore, there holds that
$$
F_{(\mu,w)}(\lambda,\beta,(0,0))\left(0, \sum_{n=1}^{\infty}a_{n}\cos(nx)\right)=\sum_{n=1}^{\infty} D(n,\lambda,\beta) a_{n}\cos(nx)
$$
with $D(n,\lambda,\beta)=-2\left( \frac{\lambda^2}{T_n}-\beta\lambda-gAh\lambda-(gB+\sigma n^2) \right)$ where $T_{n}=\frac{\tanh(nh)}{n}$.
Moreover, $D(n,\lambda_{n,\pm}^*,\beta)=0$ for $\lambda_{n,\pm}^*$ defined by (\ref{eq3.6}).
More precisely, for any $n,m\in \mathbb{N}\setminus \{0\}$  with $n\neq m$ and defining
\begin{equation}\label{eq4.2}
\beta_{n,m}:=\frac{\left( (gB+\sigma n^2)T_n-(gB+\sigma m^2)T_m \right)^2}{\sigma T_n T_m(T_n-T_m)(m^2-n^2)},
\end{equation}
we have that

(1) $\beta_{n,m}>0$;

(2) if $\lambda\notin \{\lambda_{n,\pm}^* : n\in \mathbb{N}\}$, then $F_{(\mu,w)}(\lambda,\beta,(0,0))$ is an invertible operator;

(3) if $\lambda=\lambda_{n,\pm}^*$ and and $\beta\neq\beta^*_{\pm}$ where
\begin{equation}\label{eq4.22}
\beta^*_{\pm}=-Agh\pm\sqrt{\beta_{m,n}},
\end{equation}
then $0$ is an eigenvalue of $F_{(\mu,w)}(\lambda,\beta,(0,0))$ and the corresponding eigenspace is one-dimensional;

(4) if $\lambda=\lambda_{n,\pm}^*$ and $\beta=\beta^*_{\pm}$,
then $\lambda_{n,+}^*=\lambda_{m,+}^*$ (or $\lambda_{n,-}^*=\lambda_{m,-}^*$). That is to say, $0$ is an eigenvalue of the operator $F_{(\mu,w)}(\lambda,\beta,(0,0))$ and the corresponding eigenspace is two-dimensional.
\end{lemma}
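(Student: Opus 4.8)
The plan is to reduce every assertion to an analysis of the scalar symbol $D(\cdot,\lambda,\beta)$, exploiting that the linearization is diagonal in the cosine basis. First I would record, exactly as in the computation $(\ref{eq3.3})$–$(\ref{eq3.5})$ of Section 3 but now keeping $\beta$ as a free variable, that $F_{(\mu,w)}(\lambda,\beta,(0,0))$ sends $(\nu,\sum_{n\ge1}a_n\cos(nx))$ to $-\nu+\sum_{n\ge1}D(n,\lambda,\beta)a_n\cos(nx)$; the only inputs needed are the eigenrelations $\mathcal{C}_h(\cos(nx))=\coth(nh)\sin(nx)$, $\mathcal{C}_h(\sin(nx))=-\coth(nh)\cos(nx)$ together with $n\coth(nh)=1/T_n$. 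Consequently an element $(\nu,\phi)$ lies in the kernel iff $\nu=0$ and $a_n=0$ for every $n$ with $D(n,\lambda,\beta)\ne0$, so that
$$\mathcal{N}\big(F_{(\mu,w)}(\lambda,\beta,(0,0))\big)=\{0\}\times\mathrm{span}\{\cos(nx):D(n,\lambda,\beta)=0\}.$$
Everything then reduces to counting the integers $n\ge1$ for which $D(n,\lambda,\beta)=0$, and by definition this is equivalent to $\lambda\in\{\lambda_{n,+}^*,\lambda_{n,-}^*\}$.

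For part (2), if $\lambda\notin\{\lambda_{n,\pm}^*:n\in\mathbb{N}\}$ then no mode is resonant and the multiplier never vanishes; I would upgrade injectivity to invertibility by noting $|D(n,\lambda,\beta)|\sim 2\sigma n^2$ as $n\to\infty$ (since $1/T_n\sim n$ is dominated by the $\sigma n^2$ term), so that $\inf_n|D(n,\lambda,\beta)|>0$ and the formal inverse $c_0+\sum_{n\ge1} c_n\cos(nx)\mapsto(-c_0,\sum_{n\ge1} (c_n/D(n,\lambda,\beta))\cos(nx))$ gains two derivatives, mapping $Y$ boundedly into $\mathbb{R}\times X$ and furnishing a two-sided inverse.

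The analytic heart, which settles (1), (3) and (4) simultaneously, is to decide when two distinct modes $n\ne m$ are resonant at a single pair $(\lambda,\beta)$. Writing $c:=\beta+Agh$ and $P_k:=(gB+\sigma k^2)T_k$, the conditions $D(n,\lambda,\beta)=D(m,\lambda,\beta)=0$ read $\lambda^2=c\lambda\,T_n+P_n=c\lambda\,T_m+P_m$; treating this as a linear system in $(\lambda^2,c\lambda)$ I would eliminate to obtain $c\lambda=(P_m-P_n)/(T_n-T_m)$ and $\lambda^2=(P_mT_n-P_nT_m)/(T_n-T_m)$. Using $P_mT_n-P_nT_m=\sigma(m^2-n^2)T_nT_m$ this gives $\lambda^2=\sigma(m^2-n^2)T_nT_m/(T_n-T_m)$, whereupon $c^2=(c\lambda)^2/\lambda^2$ collapses exactly to the expression $\beta_{n,m}$ of $(\ref{eq4.2})$. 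Thus simultaneous resonance forces $(\beta+Agh)^2=\beta_{n,m}$, i.e. $\beta=\beta_{\pm}^*$ as in $(\ref{eq4.22})$; off these values only the single mode $n$ survives, so the kernel is one–dimensional (part (3)), while at $\beta=\beta_\pm^*$ the coincidence $\lambda_{n,+}^*=\lambda_{m,+}^*$ (resp. the minus branch) places both $\cos(nx)$ and $\cos(mx)$ in the kernel, which is therefore two–dimensional (part (4)).

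Finally, the positivity in (1) follows from the same formula: the numerator of $\beta_{n,m}$ is a perfect square, while in the denominator $\sigma>0$ and $T_n,T_m>0$, and $(T_n-T_m)(m^2-n^2)>0$ because $k\mapsto T_k=\tanh(kh)/k$ is strictly decreasing (its derivative carries the sign of $kh\,\mathrm{sech}^2(kh)-\tanh(kh)<0$). The step I expect to be most delicate is precisely this last nondegeneracy bookkeeping. One must verify that the numerator does not itself vanish, i.e. $P_n\ne P_m$, in order to claim $\beta_{n,m}>0$ strictly rather than merely $\ge0$; and one must track the sign of $c\lambda$ to match $\beta_\pm^*$ to the correct collision $\lambda_{n,+}^*=\lambda_{m,+}^*$ versus $\lambda_{n,-}^*=\lambda_{m,-}^*$. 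The monotonicity of $T_k$ disposes of the denominator, but since $\Phi(k):=P_k$ is the sum of the decreasing term $gB\,T_k$ and the increasing term $\sigma k\tanh(kh)$, it need not be monotone in $k$, so excluding accidental coincidences $P_n=P_m$ (which would degenerate the two branches $\beta_\pm^*$ into one) is where the argument requires the most care.
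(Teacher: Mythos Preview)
Your approach is the same as the paper's---reduce everything to the scalar symbol $D(n,\lambda,\beta)$ and analyze when it can vanish for two distinct modes---but your execution is considerably more explicit: the paper's proof is only a few lines, asserting the invertibility in (2) without the growth estimate $|D|\sim 2\sigma n^2$ you supply, and summarizing your linear-algebra elimination for $(\lambda^2,c\lambda)$ simply as ``by algebraic manipulations $(\beta+Agh)^2=\beta_{n,m}$.'' The concerns you flag at the end are legitimate and are \emph{not} resolved in the paper either: the paper justifies $\beta_{n,m}>0$ solely via the monotonicity of $(T_k)_k$, which handles the denominator but says nothing about the possibility $P_n=P_m$ in the numerator, and it does not track which sign of $c=\pm\sqrt{\beta_{n,m}}$ corresponds to which branch $\lambda_{n,+}^*=\lambda_{m,+}^*$ versus $\lambda_{n,-}^*=\lambda_{m,-}^*$. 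So your proposal matches the paper's argument and in fact identifies a genuine soft spot in the statement of part~(1).
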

\begin{proof}
It follows from the sequence $(T_n)_n$ of being decreasing on $n$ that $\beta_{n,m}$ is a positive constant.
Note that zero is an eigenvalue of $F_{(\mu,w)}(\lambda, \beta,(0,0))$ if and only if $D(n,\lambda,\beta)=0$. Thus, if $\lambda\notin \{\lambda_{n,\pm}^* : n\in \mathbb{N}\}$, then $F_{(\mu,w)}(\lambda,\beta,(0,0))$ is an invertible operator.

On the other hand, if letting $\lambda_{n,+}^*=\lambda_{m,+}^*$ or $\lambda_{n,-}^*=\lambda_{m,-}^*$ for some $n\neq m$, we obtain by using algebraic manipulations that $(\beta+Agh)^2=\beta_{n,m}$. That is to say, if $(\beta+Agh)^2\notin\{\beta_{n,m}: m\neq n\}$, then $\lambda_{n,+}^*\neq\lambda_{m,+}^*$ and $\lambda_{n,-}^*\neq\lambda_{m,-}^*$ when $n\neq m$. Thus we can conclude that the kernel of $F_{(\mu,w)}(\lambda,\beta,(0,0))$ is generated by $\{\cos(nx)\}$ for any $\lambda\in\{\lambda_{n,+}^*,\lambda_{n,-}^*\}$. If $(\beta+Agh)^2\in\{\beta_{n,m}: m\neq n\}$, then $\lambda_{n,+}=\lambda_{m,+}$ (or $\lambda_{n,-}^*=\lambda_{m,-}^*$), which implies that the kernel space of $F_{(\mu,w)}(\lambda_{n,\pm}^*,\beta^*_{\pm},(0,0))$ is generated by $\{\cos(nx), \cos(mx)\}$.
\end{proof}

To deal with the new case $(4)$ in Lemma \ref{lem4.1}, we will use a multiparameter bifurcation Theorem \ref{thm6.4} with a high-dimensional kernel due to Kielh\"{o}fer. Thus, we can obtain the following existence result on two-mode water waves.

\begin{theorem} \label{thm4.2} (The existence of two-model water waves)
Let $\lambda^*\in\{\lambda^*_{n,\pm}\}$ defined by (\ref{eq3.6}) and $\beta^*\in\{\beta^*_\pm\}$ defined by (\ref{eq4.22}), then there exist a smooth solution curve
$$
\{(\lambda(s),\beta(s), (\mu(s), w(s))) |s\in (-\delta,\delta) \}
$$
to the problem (\ref{eq4.1}) with $(\lambda(0),\beta(0), (\mu(0), w(0)))=(\lambda^*,\beta^*, (0,0))$, where $\delta$ is small enough.
More precisely, we have that
$$
F(\lambda(s),\beta(s), (\mu(s), w(s)))=0,
$$
where $w(s)=s(a\cos(nx)+b\cos(mx))+O(s^2)$ for $a^2+b^2=1$.
\end{theorem}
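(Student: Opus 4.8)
The plan is to apply Kielh\"ofer's multiparameter bifurcation theorem (Theorem~\ref{thm6.4}) to the real-analytic map $F:\mathbb{R}^{3}\times X\to Y$ of (\ref{eq4.1}) at the degenerate point $(\lambda^{*},\beta^{*},(0,0))$, exploiting the fact that the number of free parameters $(\lambda,\beta)$ now equals the dimension of the kernel. By Lemma~\ref{lem4.1}(4), at this point the frequencies $n$ and $m$ become simultaneously critical, so $D(n,\lambda^{*},\beta^{*})=D(m,\lambda^{*},\beta^{*})=0$ and the kernel of the Fourier multiplier $F_{(\mu,w)}(\lambda^{*},\beta^{*},(0,0))$ is the two-dimensional space spanned by $(0,\cos(nx))$ and $(0,\cos(mx))$, while its range is the codimension-two subspace of $Y$ annihilated by
$$
\ell_{j}(u)=\frac{1}{\pi}\int_{-\pi}^{\pi}u\cos(jx)\,dx,\qquad j\in\{n,m\},
$$
normalised so that $\ell_{i}(\cos(jx))=\delta_{ij}$ for $i,j\in\{n,m\}$. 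The $\mu$-direction is nondegenerate, since $F_{\mu}(\lambda,\beta,(0,0))=-1$, and is absorbed into the complement during the reduction, so it contributes nothing to the kernel.

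With the two-dimensional kernel and cokernel in hand, the only substantial hypothesis of Theorem~\ref{thm6.4} left to check is the transversality condition, which requires that the two parameters act nondegenerately on the two resonant modes. Concretely, since
$$
F_{(\mu,w)}(\lambda,\beta,(0,0))(0,\cos(jx))=D(j,\lambda,\beta)\cos(jx),
$$
the relevant object is the $2\times2$ matrix of mixed derivatives $M_{ip}=\big\langle\ell_{i},F_{p,(\mu,w)}(\lambda^{*},\beta^{*},(0,0))(0,\cos(j_{i}x))\big\rangle=\partial_{p}D(j_{i},\lambda^{*},\beta^{*})$, with $j_{1}=n$, $j_{2}=m$ and $p$ ranging over $\lambda,\beta$. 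I would establish its invertibility by differentiating the explicit dispersion relation,
$$
\partial_{\lambda}D(j,\lambda,\beta)=-2\Big(\tfrac{2\lambda}{T_{j}}-\beta-gAh\Big),\qquad \partial_{\beta}D(j,\lambda,\beta)=2\lambda .
$$
The crucial structural fact is that $\partial_{\beta}D$ is \emph{independent} of the frequency $j$, whereas the $\lambda$-derivatives differ across modes precisely because $T_{n}\neq T_{m}$. Hence
$$
\det M=\det\begin{pmatrix}\partial_{\lambda}D(n,\lambda^{*},\beta^{*}) & 2\lambda^{*}\\\partial_{\lambda}D(m,\lambda^{*},\beta^{*}) & 2\lambda^{*}\end{pmatrix}=-8\,(\lambda^{*})^{2}\Big(\tfrac{1}{T_{n}}-\tfrac{1}{T_{m}}\Big),
$$
which is nonzero because $\lambda^{*}\neq0$ by (\ref{eq3.6}) and the sequence $(T_{j})_{j}$ is strictly decreasing, so $T_{n}\neq T_{m}$.

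Once the transversality matrix $M$ is shown to be invertible, Theorem~\ref{thm6.4} applies and yields, for each kernel direction $a\cos(nx)+b\cos(mx)$ with $a^{2}+b^{2}=1$, a smooth (indeed real-analytic) solution curve $s\mapsto(\lambda(s),\beta(s),(\mu(s),w(s)))$ of (\ref{eq4.1}) emanating from $(\lambda^{*},\beta^{*},(0,0))$, with the leading-order expansion $w(s)=s\big(a\cos(nx)+b\cos(mx)\big)+O(s^{2})$ coming directly from the kernel ansatz together with the higher-order corrections supplied by the Lyapunov--Schmidt reduction. The main obstacle I anticipate is not the reduction itself but the transversality step: one must verify that the two parameters decouple the two resonant modes. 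This is exactly why two parameters are indispensable --- with a single parameter the analogous matrix collapses to rank one and cannot resolve a two-dimensional kernel, so these mixed two-mode solutions lie genuinely outside the reach of the Crandall--Rabinowitz theory used in Section~3.
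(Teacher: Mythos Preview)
Your proposal is correct and follows essentially the same route as the paper: both apply Theorem~\ref{thm6.4} at $(\lambda^{*},\beta^{*})$, verify that the kernel and cokernel of $F_{(\mu,w)}$ are spanned by $\{\cos(nx),\cos(mx)\}$, and check the transversality hypothesis via the $\lambda$- and $\beta$-derivatives of the dispersion relation. Your packaging of condition~(2) as the invertibility of the $2\times 2$ Jacobian $M=(\partial_{p}D(j))$ is in fact a slightly sharper presentation than the paper's, which simply writes out $F_{\lambda(\mu,w)}\hat v$ and $F_{\beta(\mu,w)}\hat v$ and asserts their linear independence; your explicit determinant $-8(\lambda^{*})^{2}(T_{n}^{-1}-T_{m}^{-1})$ makes the mechanism (monotonicity of $T_{j}$ plus $\lambda^{*}\neq 0$) completely transparent.

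One small caveat: condition~(2) of Theorem~\ref{thm6.4} is formulated for a fixed $\hat v$, and for $\hat v=a\cos(nx)+b\cos(mx)$ the spanning condition is precisely $ab\det M\neq 0$. Thus the conclusion ``for each kernel direction'' should be restricted to $a,b\neq 0$; the paper makes this assumption explicit (``nontrivial constants $a,b$''). The degenerate directions $ab=0$ are of course the pure single-mode branches already handled by the Crandall--Rabinowitz argument of Section~3.
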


\begin{proof}

To finish the proof, we will verify the condition $(1)$ and the condition $(2)$ in Theorem \ref{thm6.4}.
In our problem, the abstract Banach spaces will also be defined by $X:=C^{2,\alpha}_{2\pi,e}(\mathbb{R})$ and $Y:=C^{0,\alpha}_{2\pi,e}(\mathbb{R})$ and we claim that the condition $(1)$ holds naturally for $(\lambda,\beta)=(\lambda^*, \beta^*)$. Indeed, it follows easily from the analysis in Lemma \ref{lem4.1} that $\mathcal{N}(F_{(\mu,w)}(\lambda^*, \beta^*,(0,0)))=\{\cos{nx},\cos(mx)\}$. In addition, we have that the range $\mathcal{R}(F_{(\mu,w)}(\lambda^*,\beta^*,(0,0)))$ is the closed subset of $Y$ consisting of the elements $\varphi$ with
$$
\int_{-\pi}^{\pi}\varphi\cos(nx)dx=0\quad \text{and} \quad  \int_{-\pi}^{\pi}\varphi\cos(mx)dx=0.
$$
Thus, the co-range $Y\setminus \mathcal{R}(F_{(\mu,w)}(\lambda^*, \beta^*,(0,0)))$ is generated by $\{\cos(nx),\cos(mx)\}$ which is also two-dimensional.

To check the validity of the condition $(2)$ in Theorem \ref{thm6.4}, we now take
$$\hat{v}=a\cos(nx)+b\cos(mx)$$
with nontrivial constants $a,b$ and $a^2+b^2=1$.
Moreover, it is known that a complement of $\mathcal{R}(F_{(\mu,w)}(\lambda^*,\beta^*,(0,0)))$ is
$$
Y_0=:\text{span}\{\cos(nx),\cos(mx)\}.
$$
From Lemma \ref{lem4.1}, we compute
\begin{eqnarray}\label{eq4.23}
& & F_{\lambda(\mu,w)}(\lambda^*, \beta^*,(0,0)))(a\cos(nx)+b\cos(mx)) \\ \nonumber
& & =a\left(2\beta^*+2gAh-\frac{4\lambda^*}{T_n}\right)\cos(nx)+ b\left(2\beta^*+2gAh-\frac{4\lambda^*}{T_m}\right)\cos(mx)
\end{eqnarray}
and
\begin{eqnarray}\label{eq4.24}
& & F_{\beta(\mu,w)}(\lambda^*, \beta^*,(0,0)))(a\cos(nx)+b\cos(mx)) \\ \nonumber
& & =2\lambda^*a\cos(nx)+2\lambda^*b\cos(mx)
\end{eqnarray}
It is not difficult to check that the coefficients
$$
a\left(2\beta^*+2gAh-\frac{4\lambda^*}{T_n}\right), \quad b\left(2\beta^*+2gAh-\frac{4\lambda^*}{T_m}\right)
$$
and
$$
2\lambda^*a, \quad 2\lambda^*b
$$
in (\ref{eq4.23}) and (\ref{eq4.24}) are nonzero due to the definition $\{\lambda^*_{n,\pm}\}$ in (\ref{eq3.6}) and $\{\beta^*_\pm\}$ in (\ref{eq4.22}) and the nontrivial $a,b$.
In addition, it follows from the sequence $(T_n)_n$ of being decreasing on $n$ that the vectors
$$
a\left(2\beta^*+2gAh-\frac{4\lambda^*}{T_n}\right)\cos(nx)+ b\left(2\beta^*+2gAh-\frac{4\lambda^*}{T_m}\right)\cos(mx)
$$
and
$$
2\lambda^*a\cos(nx)+2\lambda^*b\cos(mx)
$$
are linearly independent, which implies that the condition $(2)$ is also satisfied.
Then we can obtain the existence result of two-model water waves by using the Theorem \ref{thm6.4}.
\end{proof}

\begin{remark}
It is worth noting that, following the ideas in \cite{MartinM}, we can obtain the ripples solutions for stratified flows by using the Secondary bifurcation theorem due to \cite{Shearer} with checking additionally the following nondegeneracy conditions
\begin{equation}  \nonumber\\
\left|
\begin{array}{cc}
\langle F_{\lambda (\mu,w)}(\lambda^*,\beta^*,(0,0))[1,w_{1}]|\psi'_{1} \rangle  ~&~  \langle F_{\beta (\mu,w)}(\lambda^*,\beta^*,(0,0))[1,w_{1}]|\psi'_{1} \rangle \\
~ & ~\\
\langle F_{\lambda (\mu,w)}(\lambda^*,\beta^*,(0,0))[1,w_{2}]|\psi'_{2} \rangle   ~&~ \langle F_{\beta (\mu,w)}(\lambda^*,\beta^*,(0,0))[1,w_{2}]|\psi'_{2} \rangle  \\
\end{array}
\right|\neq0
\end{equation}
and
\begin{equation}  \nonumber\\
\left|
\begin{array}{cc}
\langle F_{\lambda (\mu,w)}(\lambda^*,\beta^*,(0,0))[1,w_{1}]|\psi'_{1} \rangle   ~&~  \langle F_{(\mu,w)(\mu,w)}(\lambda^*,\beta^*,(0,0))[w_{1},w_{1}]|\psi'_{1} \rangle \\
~ & ~\\
\langle F_{\lambda (\mu,w)}(\lambda^*,\beta^*,(0,0))[1,w_{2}]|\psi'_{2} \rangle   ~&~ \langle F_{(\mu,w)(\mu,w)}(\lambda^*,\beta^*,(0,0))[w_{1},w_{2}]|\psi'_{2} \rangle  \\
\end{array}
\right|\neq0,
\end{equation}
where $\langle\psi_i |\psi'_{j}\rangle=\delta_{ij} $ and $\mathcal{N}(\psi'_{i})=\mathcal{R}(F_{w}(\lambda^*,\beta^*,0))$ for $i=1,2$ with $\langle\cdot|\cdot\rangle$ denoting the duality pairing on $Y\times Y'$. However, the calculation process is extremely tedious and complicated. We also would like to mention that the two-model water waves obtained in Theorem \ref{thm4.2} are slightly different from the ripples. More precisely, a
smooth secondary bifurcation branch, consisting of Wilton ripples, emerges from each of the primary branches when the Bernoulli constant $\beta$ is
sufficiently close to the critical values $\beta^*_{\pm}$ mentioned before. While the local bifurcation curves, consisting of two-model water waves, are the primary branches with the combination $a\cos(nx)+b\cos(mx)$ as a new kernel.
In addition, as far as we know that it is still open to obtain the local stability of the ripples or the two-model water waves. Maybe some new exchange of
stability theorems similar as Theorem \ref{thm6.3} are needed to develop, which is a very interesting project
in future.
\end{remark}

\section{The waves with stagnation points}
In this section, we will discuss the occurrence of stagnation points and critical layers inside the stratified linearly water waves established in previous Section. Based on the argument above, for each choice of the parameters $(\lambda,\beta)\in \mathbb{R}^2$, the pair $(\mu,w)=(0,0)$ is a trivial solution to the stratified steady periodic water waves problem (\ref{eq4.1}). These are the laminar flow solutions, which are corresponding to the waves with a flat surface and parallel streamlines. The constant $\lambda$ can be regarded as the speed at the surface and it follows from (\ref{eq3.2}) that the stream function $\psi(X,Y)$ associated with these trivial solutions is given by
\begin{equation} \label{eq5.1}
\psi(X,Y)=\frac{\beta}{2}Y^2+\frac{gA}{6}Y^3+\left( \frac{m}{h}-\frac{\beta h}{2}-\frac{gAh^2}{6} \right)Y-m, ~~for~~0\leq Y\leq h
\end{equation}
with the pseudo velocity field
\begin{equation}\label{eq5.2}
(\psi_{Y},-\psi_{X})=\left(\beta Y+\frac{Ag}{2}Y^{2}+\frac{m}{h}-\frac{\beta h}{2}-\frac{Agh^{2}}{6},~0\right),~~for~~0\leq Y\leq h.
\end{equation}
Furthermore, we can write (\ref{eq5.2}) as
\begin{equation}\label{eq5.3}
(\psi_{Y},-\psi_{X})=\left((Y-h)\left( \frac{Ag}{2}(Y+h)+\beta  \right)+\lambda,~0\right),~~for~~0\leq Y\leq h,
\end{equation}
For the laminar flows, we can deduce from (\ref{eq5.3}) that
\begin{itemize}
  \item If $Y=h$ or $\beta=-\frac{Ag}{2}(Y+h)$, the existence of stagnation points is impossible near the bifurcation points $\lambda_{n,\pm}^{*}$ obtained in (\ref{eq3.6}) due to $\lambda_{n,+}^{*}>0$ and $\lambda_{n,-}^{*}<0$. In fact, this implies that there are not stagnation points on the surface $Y=h$ near $\lambda_{n,\pm}^{*}$.
  \item If $\beta>-\frac{Ag}{2}(Y+h)$, it follows that stagnation points can occur inside the fluid $Y<h$ near $\lambda_{n,+}^*$.
  \item If $\beta<-\frac{Ag}{2}(Y+h)$, it follows that stagnation points can occur inside the fluid $Y<h$ near $\lambda_{n,-}^*$
\end{itemize}
By perturbation, it's obvious that if the wave amplitude is small there exist stagnation points on the streamline inside the stratified periodic water waves away from the free surface. On the other hand, it follows from Section 4 that the two-model water waves can occur when
$$\beta\rightarrow \beta^*_\pm=-Ahg\pm \sqrt{\frac{\left( (gB+\sigma n^2)T_n-(gB+\sigma m^2)T_m \right)^2}{\sigma T_n T_m(T_n-T_m)(m^2-n^2)}}.  $$
It is obvious that
$$
\beta^*_\pm=-Ahg- \sqrt{\frac{\left( (gB+\sigma n^2)T_n-(gB+\sigma m^2)T_m \right)^2}{\sigma T_n T_m(T_n-T_m)(m^2-n^2)}}<-Ahg\leq-\frac{Ag}{2}(Y+h)
$$
for $0\leq Y\leq h$. This implies the existence of stagnation points inside two-model waves as $\lambda$ is close to $\lambda_{n,-}^*$.
Then it follows from similar argument \cite[Section 5]{ConstantinSV}, the critical layers and cat's eye structure would occur in two-model water waves as follows.
\begin{figure}[ht]
\centering
\includegraphics[width=0.75\textwidth]{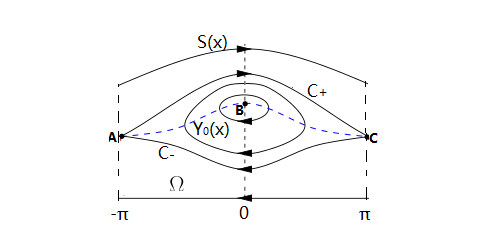}
\caption{The "Cat's eye" flow pattern and the critical layer depicted by the dashed curve, where stagnation points "\textbf{A, B, C}" on the critical layer occurs at the crest (or trough) line.}
\end{figure}

\section*{Acknowledgments}
Wang was supported by National Key R$\&$D Program of China
(No. 2022YFA1005601), National Natural Science Foundation of China (No. 12371114) and Outstanding Young
foundation of Jiangsu Province (No. BK20200042). Xu was supported by the Postdoctoral Science Foundation of China (No. 2023M731381). Zhang was supported by National Natural Science Foundation of China (No. 12301133), the Postdoctoral Science Foundation of China (No. 2023M741441) and Jiangsu Education Department (No. 23KJB110007).

\section*{Data Availability Statements}
Data sharing not applicable to this article as no datasets were
generated or analysed during the current study.

\section*{Competing interests}
The authors declare that they have no
competing interests.

\section{Appendix: Quoted results}
This appendix collects the remarkable Crandall and Rabinowitz local bifurcation theorem \cite{CrandallR}, which are designed to deal with the case of linearized operator with one dimensional kernel and recalls the stability exchange theorem due to Crandall and Rabinowitz \cite{CrandallR1}. At last, we also include a multiparameter bifurcation theorem with a high-dimensional kernel due to Kielh\"{o}fer \cite{Kielhofer}.

\begin{theorem}(Local bifurcation theorem) \label{thm6.1}
Let $X$ and $Y$ be Banach spaces and $F: \mathbb{R}\times X \rightarrow Y$ be a $C^k$ function with $k\geq 2$. Suppose that

(H1) $F(\lambda,0)=0$ for all $\lambda\in \mathbb{R}$;

(H2) For some $\lambda_*\in \mathbb{R}$, $ F_{x}\left(\lambda_*, 0\right)$ is a Fredholm operator with $\mathcal{N} \left( F_{x}\left(\lambda_*, 0\right)\right)$ and $Y /\mathcal{R}\left( F_{x}\left(\lambda_*, 0\right)\right)$ are $1$-dimensional and
the null space generated by $x_*$, and the transversality condition
\begin{equation}
F_{\lambda,x}\left(\lambda_*, 0\right)(1,x_*)\not\in\mathcal{R}\left( F_{x}\left(\lambda_*, 0\right)\right)\nonumber
\end{equation}
holds, where $\mathcal{N} \left( F_{x}\left(\lambda_*, 0\right)\right)$ and $\mathcal{R}\left( F_{x}\left(\lambda_*, 0\right)\right)$ denote null space and range space of $ F_{x}\left(\lambda_*, 0\right)$, respectively.

Then $\lambda_*$ is a bifurcation point in the sense that there exists $\varepsilon>0$ and a primary branch $\mathcal{P}$ of solutions
$$
\{ (\lambda,x)=(\Lambda(s),s\psi(s)):s\in\mathbb{R}, |s|<\varepsilon \}\subset\mathbb{R}\times X
$$
with $F(\lambda,x)=0$, $\Lambda(0)=0$, $\psi(0)=x_*$ and the maps
$$
s\mapsto\Lambda(s)\in\mathbb{R},~~~~s\mapsto s\psi(s)\in X
$$
are of class $C^{k-1}$ on $(-\varepsilon, \varepsilon)$.
\end{theorem}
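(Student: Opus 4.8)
The plan is to prove this classical statement by a Lyapunov--Schmidt reduction combined with the standard rescaling that divides out the trivial branch supplied by (H1). Write $\mathcal{N} := \mathcal{N}(F_x(\lambda_*,0)) = \mathrm{span}\{x_*\}$ and $\mathcal{R} := \mathcal{R}(F_x(\lambda_*,0))$. Because $F_x(\lambda_*,0)$ is Fredholm, $\mathcal{R}$ is closed and one may fix topological complements $X = \mathcal{N}\oplus Z$ and $Y = \mathcal{R}\oplus W$, where $Z$ is closed and $W$ is one-dimensional (representing the cokernel). Let $P:Y\to\mathcal{R}$ and $Q=I-P:Y\to W$ be the associated continuous projections. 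A preliminary observation I would record is that the restriction $F_x(\lambda_*,0)|_Z:Z\to\mathcal{R}$ is a bounded bijection, hence a linear isomorphism by the open mapping theorem, since $Z$ meets $\mathcal{N}$ only in $0$ and $F_x(\lambda_*,0)$ maps $X$ onto $\mathcal{R}$.

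First I would introduce the rescaled unknown $x = s(x_*+z)$ with $s\in\mathbb{R}$ small and $z\in Z$, and define
\[
\tilde F(\lambda,s,z) = \int_0^1 F_x\bigl(\lambda,\,ts(x_*+z)\bigr)(x_*+z)\,dt,
\]
which, by (H1) and the fundamental theorem of calculus, equals $s^{-1}F(\lambda,s(x_*+z))$ for $s\neq 0$ and equals $F_x(\lambda,0)(x_*+z)$ at $s=0$. This $\tilde F$ is $C^{k-1}$ near $(\lambda_*,0,0)$ and satisfies $\tilde F(\lambda_*,0,0)=F_x(\lambda_*,0)x_*=0$. Splitting $\tilde F=0$ into the auxiliary equation $P\tilde F=0$ and the bifurcation equation $Q\tilde F=0$, I would solve the auxiliary equation first: since $\partial_z(P\tilde F)(\lambda_*,0,0)=PF_x(\lambda_*,0)|_Z=F_x(\lambda_*,0)|_Z$ is the isomorphism $Z\to\mathcal{R}$ noted above, the implicit function theorem furnishes a $C^{k-1}$ map $z=z(\lambda,s)$ near $(\lambda_*,0)$ with $z(\lambda_*,0)=0$ and $P\tilde F(\lambda,s,z(\lambda,s))\equiv 0$.

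Substituting back reduces the problem to the scalar bifurcation equation $\Phi(\lambda,s):=Q\tilde F(\lambda,s,z(\lambda,s))=0$, valued in the one-dimensional space $W$. At $s=0$ we get $\Phi(\lambda_*,0)=QF_x(\lambda_*,0)x_*=0$, and differentiating in $\lambda$ (the $Z$-valued contribution dies under $Q$ because $F_x(\lambda_*,0)$ sends $Z$ into $\mathcal{R}=\ker Q$) yields $\partial_\lambda\Phi(\lambda_*,0)=QF_{\lambda x}(\lambda_*,0)x_*$. This is exactly where the transversality hypothesis enters: $F_{\lambda x}(\lambda_*,0)(1,x_*)\notin\mathcal{R}$ forces $QF_{\lambda x}(\lambda_*,0)x_*\neq 0$, so $\partial_\lambda\Phi(\lambda_*,0)\neq 0$. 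A final application of the implicit function theorem then solves $\Phi(\lambda,s)=0$ for $\lambda=\Lambda(s)$ with $\Lambda(0)=\lambda_*$, and setting $\psi(s)=x_*+z(\Lambda(s),s)$ produces the primary branch $(\lambda,x)=(\Lambda(s),s\psi(s))$ with $\psi(0)=x_*$; the $C^{k-1}$ regularity of $z$ and $\Lambda$ transfers to the maps $s\mapsto\Lambda(s)$ and $s\mapsto s\psi(s)$.

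I expect the main obstacle to be the rescaling step: verifying that $\tilde F$ extends as a genuinely $C^{k-1}$ map across $s=0$ (not merely continuously) rests on the integral representation afforded by (H1) and on differentiating under the integral sign, and it is precisely this device that converts the degenerate operator $F_x(\lambda_*,0)$ into one to which the implicit function theorem applies. Everything else is bookkeeping once the Fredholm splitting is fixed and the transversality-to-nondegeneracy translation $\partial_\lambda\Phi(\lambda_*,0)=QF_{\lambda x}(\lambda_*,0)x_*\neq 0$ is in place. I would conclude by remarking that the Lyapunov--Schmidt construction captures \emph{all} zeros of $F$ near $(\lambda_*,0)$, so that locally the solution set is exactly the union of the trivial branch and the primary branch $\mathcal{P}$.
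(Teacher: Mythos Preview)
Your Lyapunov--Schmidt argument is the standard and correct route to this result, and it matches the original proof of Crandall and Rabinowitz. However, you should be aware that the paper itself does \emph{not} prove this theorem: it is stated in the Appendix under the heading ``Quoted results'' and is simply cited from \cite{CrandallR} without proof. So there is no ``paper's own proof'' to compare against---the authors treat it as a black-box tool and only verify its hypotheses in Section~3.

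One small discrepancy worth flagging: the theorem as stated in the paper has $\Lambda(0)=0$, whereas your construction (correctly) yields $\Lambda(0)=\lambda_*$. This is almost certainly a typo in the paper's statement rather than an error on your part; the standard Crandall--Rabinowitz conclusion is $\Lambda(0)=\lambda_*$, and indeed the paper's own application in Theorem~\ref{thm3.1} uses $\lambda_{n,\pm}(0)=\lambda_{n,\pm}^*$, consistent with your version.
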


\begin{theorem}(The formula of bifurcation direction \cite{Kielhofer}) \label{thm6.2}
\emph{If $F$ satisfies the hypotheses of Theorem \ref{thm6.1}, then
\begin{equation}
\lambda'(0)=-\frac{\left\langle l,F_x^{(2)}\left(\lambda_*,0\right)x_*^2\right\rangle}{2\left\langle l, F_{\lambda x}(\lambda_*,0)x_*\right\rangle},\nonumber
\end{equation}
where $l\in X^*$ satisfying} $\mathcal{N}(l)=\mathcal{R}\left(D_x F(\lambda_*,0)\right)$, \emph{and $X^*$ being the dual space of $X$.
Furthermore, for $n\geq2$, if $F_x^{(j)}(\lambda_*,0)x_*^j=0$ for $j\in\{1,\ldots,n\}$, then $\lambda^{(j)}(0)=0$, $\psi^{(j)}(0)=0$ for $j\in\{1,\ldots,n-1\}$} and
\begin{equation}
\lambda^{(n)}(0)=-\frac{\left\langle l,F_x^{(n+1)}\left(\lambda_*,0\right)x_*^{n+1}\right\rangle}{(n+1)\left\langle l, F_{\lambda x}(\lambda_*,0)x_*\right\rangle},\nonumber
\end{equation}
\emph{where $F_x^{(j)}(\lambda_*,0)x_*^j$ means the value of the $j$-th Fr\'{e}chet derivative of $F(\lambda_*,x)$ with respect to $x$ at $(\lambda_*,0)$}.
\end{theorem}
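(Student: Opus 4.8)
The plan is to derive both formulas by repeatedly differentiating the identity $G(s):=F(\lambda(s),x(s))\equiv 0$ along the Crandall--Rabinowitz branch and projecting onto the one-dimensional cokernel with the functional $l$. Here $x(s)=s\psi(s)$ with $\psi(0)=x_*$; I will use the standard normalization behind Theorem \ref{thm6.1}, namely $x(s)=sx_*+z(s)$ with $z$ valued in a fixed closed complement $X_1$ of $\mathcal{N}(F_x(\lambda_*,0))=\mathrm{span}\{x_*\}$ and $z(0)=z'(0)=0$. Writing $\lambda_k:=\lambda^{(k)}(0)$ and $x_k:=x^{(k)}(0)$, this gives $x_1=x_*$ and $x_k=z^{(k)}(0)\in X_1$ for $k\ge 2$; equivalently $x_k=k\psi^{(k-1)}(0)$, so $x_k=0$ iff $\psi^{(k-1)}(0)=0$. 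Throughout, $l$ is read as the element of $Y^*$ annihilating $\mathcal{R}(F_x(\lambda_*,0))$, since the quantities to which it is applied lie in $Y$. I would first record the two consequences of (H1): differentiating $F(\lambda,0)\equiv 0$ in $\lambda$ shows every pure $\lambda$-derivative vanishes along the trivial branch, $F_{\lambda^p}(\lambda_*,0)=0$ for all $p\ge 1$; in particular $F_\lambda(\lambda_*,0)=0$ and $F_{\lambda\lambda}(\lambda_*,0)=0$.

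For the first formula I would compute $G''(0)$ by the chain rule. The two vanishing facts kill the $F_\lambda\lambda''$ and $F_{\lambda\lambda}(\lambda')^2$ contributions, leaving
\[
G''(0)=2\lambda_1\,F_{\lambda x}(\lambda_*,0)(1,x_*)+F_x^{(2)}(\lambda_*,0)x_*^2+F_x(\lambda_*,0)x_2=0.
\]
Applying $l$ annihilates $F_x(\lambda_*,0)x_2$, which lies in $\mathcal{R}(F_x(\lambda_*,0))$, and leaves a scalar equation for $\lambda_1$. Since transversality means $F_{\lambda x}(\lambda_*,0)(1,x_*)\notin\mathcal{R}(F_x(\lambda_*,0))=\mathcal{N}(l)$, the coefficient $\langle l,F_{\lambda x}(\lambda_*,0)(1,x_*)\rangle$ is nonzero, and solving gives exactly the stated expression for $\lambda'(0)$.

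For the higher-order part I would argue by induction, the engine being a bookkeeping of $G^{(k)}(0)$ via the Faà di Bruno formula. A generic term is $F_{\lambda^p x^q}(\lambda_*,0)[\lambda_{a_1},\dots,\lambda_{a_p};x_{b_1},\dots,x_{b_q}]$ with $\sum a_i+\sum b_j=k$ and all indices $\ge 1$. Assuming $\lambda_j=0$ for $1\le j\le k-1$ and $x_j=0$ for $2\le j\le k-1$, every term carrying a $\lambda$-factor drops out: a factor with $a_i\le k-1$ vanishes by hypothesis, while the only escape $a_i\ge k$ forces $p=1,q=0$, giving $F_\lambda(\lambda_*,0)\lambda_k=0$. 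Among the remaining pure $x$-terms only $b_j\in\{1,k\}$ survive, leaving
\[
G^{(k)}(0)=F_x(\lambda_*,0)x_k+F_x^{(k)}(\lambda_*,0)x_*^k=0 .
\]
To capture $\lambda_k$ I push the identical computation to order $k+1$, where exactly one mixed term now survives, $F_{\lambda x}(\lambda_*,0)[\lambda_k,x_*]$ with combinatorial coefficient $k+1$:
\[
G^{(k+1)}(0)=(k+1)\lambda_k\,F_{\lambda x}(\lambda_*,0)(1,x_*)+F_x(\lambda_*,0)x_{k+1}+F_x^{(k+1)}(\lambda_*,0)x_*^{k+1}=0 .
\]
Invoking the hypothesis $F_x^{(j)}(\lambda_*,0)x_*^j=0$ for $j\le n$: for $k\le n-1$ the last term vanishes, and applying $l$ (which kills $F_x x_{k+1}$) together with transversality forces $\lambda_k=0$; feeding this back yields $F_x(\lambda_*,0)x_{k+1}=0$, so $x_{k+1}\in X_1\cap\mathcal{N}(F_x(\lambda_*,0))=\{0\}$, i.e. $\psi^{(k)}(0)=0$. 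This closes the induction and gives $\lambda^{(j)}(0)=\psi^{(j)}(0)=0$ for $1\le j\le n-1$; taking $k=n$ in the last display, the term $F_x^{(n+1)}(\lambda_*,0)x_*^{n+1}$ is no longer assumed to vanish, and projecting with $l$ produces the stated formula for $\lambda^{(n)}(0)$.

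The main obstacle is the Faà di Bruno bookkeeping: proving that under the vanishing hypotheses only the two (respectively three) listed terms survive at each order, and pinning down their coefficients, in particular the factor $k+1$ in front of the mixed term. I would verify these against the low-order cases $k=1,2$, noting that the $k=1$ instance of the order-$(k+1)$ identity reproduces the $G''(0)$ computation above. A minor but necessary point is smoothness: forming $\lambda^{(n)}(0)$ requires differentiating $G$ up to order $n+1$, hence $F\in C^{n+1}$ and a correspondingly differentiable branch, which I would state at the outset.
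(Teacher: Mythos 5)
The paper itself offers no proof of Theorem \ref{thm6.2}: it is stated in the appendix as a quoted result from Kielh\"{o}fer's book, so there is no in-paper argument to compare against. Your route---differentiate $G(s)=F(\lambda(s),x(s))\equiv 0$ along the Crandall--Rabinowitz branch, annihilate the range terms with $l$ (correctly re-read as an element of $Y^*$; the statement's ``$l\in X^*$'' is a typo), and use transversality to solve for the $\lambda$-derivatives---is the standard direct derivation of these formulas; Kielh\"{o}fer's own derivation instead differentiates the Lyapunov--Schmidt reduced bifurcation function, which is equivalent but economizes on smoothness, a point you correctly flag at the end (differentiating $G$ to order $n+1$ needs a correspondingly smooth branch, i.e.\ more regularity than the bare $C^2$ of Theorem \ref{thm6.1}; this is harmless in the present paper, where $F$ is real-analytic). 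Your computation of $\lambda'(0)$ is correct.

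There is, however, one concrete slip in the induction. Under your stated hypothesis ($\lambda_j=0$ for $1\le j\le k-1$ and $x_j=0$ for $2\le j\le k-1$ only), the order-$(k+1)$ expansion contains, besides the three terms you list, the pure-$x$ contribution $(k+1)\,F_{xx}(\lambda_*,0)[x_k,x_*]$, coming from the partitions with one block of size $k$ and one singleton. Your phrase ``exactly one mixed term now survives'' correctly accounts for the mixed $\lambda$--$x$ term $F_{\lambda x}(\lambda_*,0)[\lambda_k,x_*]$, but silently discards this second term, and at that point of the argument $x_k$ has not yet been shown to vanish, so the displayed identity for $G^{(k+1)}(0)$ is not justified as written. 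The repair is immediate and uses only tools already in your proof: for $k\le n$, the order-$k$ identity $F_x(\lambda_*,0)x_k+F_x^{(k)}(\lambda_*,0)x_*^k=0$ combined with the hypothesis $F_x^{(k)}(\lambda_*,0)x_*^k=0$ gives $F_x(\lambda_*,0)x_k=0$, hence $x_k\in X_1\cap\mathcal{N}(F_x(\lambda_*,0))=\{0\}$---exactly the argument you apply one index higher to $x_{k+1}$. Equivalently, strengthen the induction hypothesis to carry $x_j=0$ for $2\le j\le k$ (which your ``feeding back'' step at the previous stage already produces). With that one-line correction the induction closes, and the final projection at $k=n$ yields the stated formula for $\lambda^{(n)}(0)$.
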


We recall the concept of formal stability and the definition of $K-$simple eigenvalue introduced in \cite{CrandallR1}. The operator equation $F(\lambda,x)=0$ can be regarded as the equilibrium form of the evolution equation
\begin{eqnarray}\label{eq6.1}
\frac{dx}{dt}=F(\lambda,x).
\end{eqnarray}
Assume that $F\left(\lambda_0, x_0\right)=0$. If all of eigenvalues of $F_x\left(\lambda_0, x_0\right)$ are negative, $x_0$ is called asymptotically formal stable solution of (\ref{eq6.1}). If there exists a positive eigenvalue of $F_x\left(\lambda_0, x_0\right)$, $x_0$ is called unstable. If there exists a zero eigenvalue of $F_x\left(\lambda_0, x_0\right)$, $x_0$ is called neutral stable.
Let $\mathcal{N}(T)$ and $\mathcal{R}(T)$ denote the null space and range of any operator $T$.

\begin{definition}

\emph{Let $T,K$: $X\rightarrow Y$ be two bounded linear operators from a real Banach space $X$ to another one $Y$. A complex number $\theta$ is called a $K-$simple eigenvalue of $T$ if
$$
dim \mathcal{N}(T-\theta K)=1=codim \mathcal{R}(T-\theta K)
$$
and
$$
K \psi^* \notin \mathcal{R}(T-\theta K) ~~for ~~0\neq \psi^*\in \mathcal{N}(T-\theta K).
$$}
\end{definition}

If $K$ is the identity operator, $K-$simple eigenvalue is called simple.
Now let us recall the Crandall-Rabinowitz exchange of stability theorem \cite{CrandallR1}.

\begin{theorem} \label{thm6.3}

\emph{Let $X$ and $Y$ be real Banach spaces and let $K, T: X\rightarrow Y$ be two bounded linear operators. Assume $F: \mathbb{R}\times X\rightarrow Y$ is $C^{2}$ near $\left(\lambda_*,0\right)\in \mathbb{R}\times X$ with $F(\lambda,0)=0$ for $\left\vert\lambda_*-\lambda\right\vert$ sufficiently small. Let $T=F_x\left(\lambda_*,0\right)$. If $\theta=0$ is a $F_{\lambda x}\left(\lambda_*,0\right)-$simple eigenvalue of operator $T$ and a $K-$simple eigenvalue of $T$, then there exists locally a curve $(\lambda(s),x(s))\in \mathbb{R}\times X$ such that
$$
(\lambda(0),x(0))=\left(\lambda_*,0\right)~~and ~~F(\lambda(s),x(s))=0.
$$
Moreover, if $F(\lambda,x)=0$ with $x\neq 0$ and $(\lambda,x)$ near $\left(\lambda_*,0\right)$, then
$$
(\lambda,x)=(\lambda(s),x(s))~~for~~some~~s\neq 0.
$$
Furthermore, there are eigenvalues $\theta(s)$, $\theta_{triv}(\lambda)\in \mathbb{R}$ with eigenvectors $\psi(s)$, $\psi_{triv}(\lambda)\in X $, such that
\begin{eqnarray}
F_{x}\left(\lambda(s),x(s)\right)\psi(s)=\theta(s)K\psi(s),\nonumber
\end{eqnarray}
\begin{eqnarray}
F_{x}\left(\lambda,0\right)\psi_{triv}(\lambda)=\theta_{triv}(\lambda)K\psi_{triv}(\lambda)\nonumber
\end{eqnarray}
with
\begin{align*}
\theta(0)=\theta_{triv}\left(\lambda_*\right)=0,~~\psi(0)=\psi_{triv}\left(\lambda_*\right)=\psi^*.
\end{align*}
Each curve is $C^{1}$ if $F$ is $C^{2}$, then
\begin{eqnarray}
\frac{d\theta_{triv}\left(\lambda\right)}{d\lambda}|_{\lambda=\lambda_*}\neq 0,~~\lim_{s\rightarrow 0,\theta\left(s\right)\neq 0}\frac{s\lambda'(s)}{\theta(s)}=-\frac{1}{\theta_{triv}'\left(\lambda_*\right)}.\nonumber
\end{eqnarray}
}
\end{theorem}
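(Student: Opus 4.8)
The statement is the classical Crandall--Rabinowitz principle of exchange of stability, so the plan is to assemble it from four ingredients: existence and local uniqueness of the bifurcating curve, construction of a smooth perturbed eigenvalue, the transversality $\theta_{triv}'(\lambda_*)\neq 0$, and the asymptotic comparison yielding the limit formula. First I would note that the hypothesis ``$\theta=0$ is an $F_{\lambda x}(\lambda_*,0)$-simple eigenvalue of $T$'' unpacks, via the definition, to $\dim\mathcal N(T)=\mathrm{codim}\,\mathcal R(T)=1$ with $\mathcal N(T)=\mathrm{span}\{\psi^*\}$ together with $F_{\lambda x}(\lambda_*,0)\psi^*\notin\mathcal R(T)$; this is exactly the Fredholm-plus-transversality hypothesis (H2) of Theorem \ref{thm6.1}. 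Hence Theorem \ref{thm6.1} already delivers the curve $(\lambda(s),x(s))$ with $(\lambda(0),x(0))=(\lambda_*,0)$, $x(s)=s(\psi^*+o(1))$, $F(\lambda(s),x(s))=0$, and the local uniqueness assertion that every nearby nontrivial zero lies on this curve. So the first two displayed claims need no new work.

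Next I would construct the eigenvalue branch. Fix a closed complement $Z$ with $X=\mathrm{span}\{\psi^*\}\oplus Z$, normalize eigenvectors to lie in $\psi^*+Z$, and set $\Phi(\lambda,x,\theta,z)=F_x(\lambda,x)(\psi^*+z)-\theta K(\psi^*+z)$, so that $\Phi(\lambda_*,0,0,0)=T\psi^*=0$. Its partial derivative in $(\theta,z)$ at the base point is $(\dot\theta,\dot z)\mapsto T\dot z-\dot\theta\,K\psi^*$; because $\theta=0$ is a $K$-simple eigenvalue we have $K\psi^*\notin\mathcal R(T)$ while $T|_Z:Z\to\mathcal R(T)$ is an isomorphism, so this map is a bijection of $\mathbb R\times Z$ onto $Y$. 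The implicit function theorem then yields smooth $\theta(\lambda,x),z(\lambda,x)$; restricting to $x=0$ gives $(\theta_{triv}(\lambda),\psi_{triv}(\lambda))$ and restricting to the bifurcation curve gives $(\theta(s),\psi(s))$, with the claimed base-point values. For transversality, differentiate $F_x(\lambda,0)\psi_{triv}(\lambda)=\theta_{triv}(\lambda)K\psi_{triv}(\lambda)$ at $\lambda_*$ to get $F_{\lambda x}(\lambda_*,0)\psi^*+T\psi_{triv}'(\lambda_*)=\theta_{triv}'(\lambda_*)K\psi^*$; choosing $\ell\in Y^*$ with $\mathcal N(\ell)=\mathcal R(T)$ and $\langle\ell,K\psi^*\rangle=1$ (possible as $K\psi^*\notin\mathcal R(T)$) and pairing kills the $T$-term, giving $\theta_{triv}'(\lambda_*)=\langle\ell,F_{\lambda x}(\lambda_*,0)\psi^*\rangle\neq0$ precisely by the $F_{\lambda x}$-simple hypothesis.

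The decisive step is the limit formula. The key observation is that $L_s:=\ell\circ F_x(\lambda(s),x(s))\in X^*$ vanishes at $s=0$ (since $\ell$ annihilates $\mathcal R(T)$), hence $L_s=sL_0'+o(s)$ in $X^*$. I would evaluate $L_s$ on two vectors that both tend to $\psi^*$. On $\psi(s)$, the eigenvalue relation gives $L_s(\psi(s))=\theta(s)\langle\ell,K\psi(s)\rangle$. On $\dot x(s)=\tfrac{d}{ds}x(s)$ (with $\dot x(0)=\psi^*$), differentiating $F(\lambda(s),x(s))=0$ and writing $F_\lambda(\lambda(s),x(s))=s\,g(s)$ with $g(s)\to F_{\lambda x}(\lambda_*,0)\psi^*$ — valid because $F_\lambda(\lambda,0)\equiv0$ — gives $L_s(\dot x(s))=-s\lambda'(s)\langle\ell,g(s)\rangle$. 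Forming the ratio yields $\frac{\theta(s)}{s\lambda'(s)}=-\frac{L_s(\psi(s))}{L_s(\dot x(s))}\cdot\frac{\langle\ell,g(s)\rangle}{\langle\ell,K\psi(s)\rangle}$, where the last factor tends to $\theta_{triv}'(\lambda_*)$ by the transversality step, while $L_s(\psi(s))/L_s(\dot x(s))\to1$ since both equal $sL_0'(\psi^*)+o(s)$. This delivers $\lim_{s\to0}s\lambda'(s)/\theta(s)=-1/\theta_{triv}'(\lambda_*)$.

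The hard part is the degenerate case $L_0'(\psi^*)=0$. A short computation using the direction formula of Theorem \ref{thm6.2} shows $L_0'(\psi^*)=\tfrac12\langle\ell,F_x^{(2)}(\lambda_*,0)\psi^{*2}\rangle$, which vanishes exactly when $\lambda'(0)=0$ — the pitchfork situation that in fact governs Theorem \ref{thm4.1}. In that regime both $\theta(s)$ and $s\lambda'(s)$ are $O(s^2)$, the quotient $L_s(\psi(s))/L_s(\dot x(s))$ is a genuine $0/0$ at first order, and one must expand $L_s$, $\psi(s)$, and $\dot x(s)$ to second order to recover the common leading term and hence the limit. Carrying out this higher-order matching carefully — rather than the routine implicit-function arguments of the earlier steps — is where the real care is required.
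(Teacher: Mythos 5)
Your first three steps are correct and essentially reproduce the classical argument of \cite{CrandallR1}: the $F_{\lambda x}(\lambda_*,0)$-simplicity hypothesis is precisely (H2) of Theorem \ref{thm6.1}, so existence and local uniqueness of the curve come for free; the implicit-function construction applied to $\Phi(\lambda,x,\theta,z)=F_x(\lambda,x)(\psi^*+z)-\theta K(\psi^*+z)$, using that $(\dot\theta,\dot z)\mapsto T\dot z-\dot\theta K\psi^*$ is an isomorphism of $\mathbb{R}\times Z$ onto $Y$, is exactly the $K$-simple eigenvalue perturbation lemma; and the computation $\theta_{triv}'(\lambda_*)=\langle\ell,F_{\lambda x}(\lambda_*,0)\psi^*\rangle\neq 0$ is right. (For the record, the paper itself gives no proof of Theorem \ref{thm6.3}; it is quoted verbatim from \cite{CrandallR1}, so the relevant comparison is with that original proof and with its presentation in \cite{Kielhofer}.)

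The genuine gap is your final step, and it matters. Your ratio argument $L_s(\psi(s))/L_s(\dot x(s))\to 1$ is only justified when $L_0'(\psi^*)=\tfrac12\langle\ell,F_x^{(2)}(\lambda_*,0)\psi^{*2}\rangle\neq0$, i.e.\ in the transcritical case $\lambda'(0)\neq0$. You correctly identify the degenerate case, but the repair you sketch (second-order matching of $L_s$, $\psi(s)$, $\dot x(s)$) cannot be carried out under the stated hypotheses: $F$ is only $C^2$, so all the curves involved are only $C^1$ and the second-order Taylor coefficients you would need to compare need not exist. There is also a circularity risk you do not address: to divide by $L_s(\dot x(s))=-s\lambda'(s)\langle\ell,g(s)\rangle$ you must know $\lambda'(s)\neq0$ on the set $\{\theta(s)\neq0\}$ over which the limit is taken, and in the degenerate case this is part of what has to be proved. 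This is not a corner case for the present paper: Theorem \ref{thm4.1} shows $\lambda_{n,\pm}'(0)=0$, so the pitchfork situation is exactly the one in which Theorem \ref{thm6.3} gets used. The classical proof avoids asymptotic matching altogether. Via Lyapunov--Schmidt one reduces to a scalar bifurcation function and factors it as $\tilde\Phi(\lambda,s)=s\Psi(\lambda,s)$, where $\Psi$ is $C^1$ because $\tilde\Phi(\lambda,0)\equiv0$; along the nontrivial branch $\Psi(\lambda(s),s)\equiv0$, so differentiating gives the exact identity $\partial_s\tilde\Phi(\lambda(s),s)=s\,\partial_s\Psi(\lambda(s),s)=-s\lambda'(s)\,\partial_\lambda\Psi(\lambda(s),s)$, with $\partial_\lambda\Psi(\lambda(s),s)\to\theta_{triv}'(\lambda_*)$ up to the normalization relating the reduced and full eigenvalues; a separate comparison lemma shows $\theta(s)/\partial_s\tilde\Phi(\lambda(s),s)\to1$. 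The factor $s\lambda'(s)$ is thus extracted algebraically, for all small $s$, whether or not $\lambda'(0)=0$, and only $C^1$ data are needed. To complete your proof you need this identity (or the equivalent device in \cite{CrandallR1}), not a higher-order expansion.
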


Since $\frac{d\theta_{triv}\left(\lambda\right)}{d\lambda}|_{\lambda=\lambda_*}\neq 0$ and $\theta_{triv}\left(\lambda_*\right)=0$, the trivial solution is stable formally at one side of $\lambda_*$ and is unstable at another side of $\lambda_*$.
Based on the arguments above, the nontrivial solution $x(s)$ is called to be stable formally if $\theta(s)<0$ and unstable if $\theta(s)>0$.

\begin{theorem} \label{thm6.4}
Let $X$ and $Y$ be real Banach spaces, $U$ be a neighborhood of $0$ in $X$ and assume that $F\in C^2(\Lambda\times U, V)$ with $F(\lambda,0)=0$ for any $\lambda\in \Lambda$, where $\lambda=(\lambda_1,\lambda_2,...,\lambda_k)$ and $\Lambda$ is an open set in $\mathbb{R}^k$ for $k\in \mathbb{N}^+$. Suppose that

(1) $\text{dim} \mathcal{N}\left(F_x(\mu,0)\right)=\text{codim} \mathcal{R}\left( F_x(\mu,0)\right)=k$ for some $\mu=(\mu_1,\mu_2,...,\mu_k)\in \Lambda$ and $\left\{x_1,x_2,...,x_k\right\}$ are the basis of $\mathcal{N}\left( F_x(\mu,0)\right)$;

(2) there is a $\hat{v}\in\mathcal{N}\left(F_x(\mu,0)\right)$ with $\|\hat{v}\|=1$ such that a complement $Y_0$ of $\mathcal{R}\left( F_x(\mu,0)\right)$ is spanned by the vectors $F_{\lambda_ix}(\mu,0)\hat{v}$ for $i=1,...,k$;\\
Then the equation $F(\lambda, x)=0$ possesses a continuously differentiable solution curve
$$
\{(\lambda(s), x(s) ) |s \in (-\delta, \delta)\}\subset  \mathbb{R}^k\times X
$$
through $(\lambda(0),x(0))=(\mu, 0)$ and $\frac{dx}{ds}(0)=\hat{v}$.

\end{theorem}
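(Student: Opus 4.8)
The plan is to prove this multiparameter bifurcation result by a Lyapunov--Schmidt reduction, followed by an application of the implicit function theorem (IFT) in the \emph{parameter} variable $\lambda$ rather than in the state variable $x$. First I would exploit the Fredholm structure from hypothesis (1): since $\dim\mathcal{N}(F_x(\mu,0))=\operatorname{codim}\mathcal{R}(F_x(\mu,0))=k<\infty$, the operator $T:=F_x(\mu,0)$ is Fredholm of index zero, so I may fix topological splittings $X=\mathcal{N}\oplus X_1$ and $Y=Y_0\oplus\mathcal{R}$, where $\mathcal{N}=\mathcal{N}(T)$, $\mathcal{R}=\mathcal{R}(T)$, and $Y_0$ is the complement furnished by hypothesis (2). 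Let $P\colon Y\to Y_0$ be the continuous projection along $\mathcal{R}$. Writing $x=v+w$ with $v\in\mathcal{N}$ and $w\in X_1$, the equation $F(\lambda,x)=0$ splits into the auxiliary equation $(I-P)F(\lambda,v+w)=0$ in $\mathcal{R}$ and the bifurcation equation $PF(\lambda,v+w)=0$ in $Y_0$.

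For the auxiliary equation I would note that its $w$-derivative at $(\mu,0,0)$ is $(I-P)T|_{X_1}$, which is an isomorphism of $X_1$ onto $\mathcal{R}$ because $T$ maps $X_1$ bijectively onto its range. The IFT then yields a $C^1$ map $w=w(\lambda,v)$ near $(\mu,0)$ with $w(\mu,0)=0$. Differentiating the auxiliary identity and using $F(\lambda,0)\equiv 0$ together with $Tv=0$ for $v\in\mathcal{N}$, one obtains $w_v(\mu,0)=0$ and $w_{\lambda_i}(\mu,0)=0$; these vanishing first derivatives are exactly what will later force the prescribed tangent $\dot x(0)=\hat v$. Substituting back defines the reduced map $\Phi(\lambda,v):=PF(\lambda,v+w(\lambda,v))$ from a neighborhood in $\mathbb{R}^k\times\mathcal{N}$ into $Y_0$, with $\Phi(\lambda,0)=0$ for all $\lambda$.

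The heart of the argument, and the main obstacle, is that this finite-dimensional reduced equation is genuinely degenerate: because $w_v(\mu,0)=0$ and $Tv=0$ on $\mathcal{N}$, one computes $\Phi_v(\mu,0)=0$, so the naive IFT in $v$ is unavailable. The key maneuver is to search only along the distinguished ray $v=s\hat v$ and to solve for the $k$ parameters instead. Since $\Phi(\lambda,0)=0$, the fundamental theorem of calculus factors out the trivial branch: $\Phi(\lambda,s\hat v)=s\,\tilde G(\lambda,s)$ with $\tilde G(\lambda,s):=\int_0^1\Phi_v(\lambda,ts\hat v)\hat v\,dt$, which is $C^1$ since $F\in C^2$. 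Then $\tilde G(\mu,0)=\Phi_v(\mu,0)\hat v=0$, and I would compute $\partial_{\lambda_i}\tilde G(\mu,0)=\Phi_{\lambda_i v}(\mu,0)\hat v$; using $w_v(\mu,0)=w_{\lambda_i}(\mu,0)=0$ (which kill the $F_{xx}$ and $F_x$ contributions) together with $P|_{\mathcal{R}}=0$, this reduces exactly to $PF_{\lambda_i x}(\mu,0)\hat v=F_{\lambda_i x}(\mu,0)\hat v$.

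Hypothesis (2) is precisely the assertion that the $k$ vectors $F_{\lambda_i x}(\mu,0)\hat v$ span $Y_0\cong\mathbb{R}^k$, so they are linearly independent and $\tilde G_\lambda(\mu,0)\colon\mathbb{R}^k\to Y_0$ is an isomorphism. Applying the IFT to $\tilde G(\lambda,s)=0$ in $\lambda$ then produces a $C^1$ curve $\lambda=\lambda(s)$ with $\lambda(0)=\mu$ and $\tilde G(\lambda(s),s)=0$, whence $\Phi(\lambda(s),s\hat v)=s\,\tilde G(\lambda(s),s)=0$. Setting $x(s):=s\hat v+w(\lambda(s),s\hat v)$ gives $F(\lambda(s),x(s))=0$ with $x(0)=0$, and the chain rule together with $w_v(\mu,0)=w_\lambda(\mu,0)=0$ yields $\dot x(0)=\hat v$, completing the construction. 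The only real subtlety is this factorization-plus-reparametrization step, which trades the lost nondegeneracy of $\Phi_v$ for the transversality encoded in hypothesis (2).
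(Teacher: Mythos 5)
The paper offers no proof of this theorem: it is stated in the appendix of ``quoted results'' and attributed to Kielh\"{o}fer \cite{Kielhofer}, so there is no internal proof to compare against. Your argument is correct and is essentially the standard proof from that reference --- a Lyapunov--Schmidt reduction using the complement $Y_0$ furnished by hypothesis (2), factorization of the reduced map along the ray $v=s\hat{v}$ as $\Phi(\lambda,s\hat{v})=s\tilde{G}(\lambda,s)$ (valid since $\Phi(\lambda,0)\equiv 0$, which in turn rests on $w(\lambda,0)=0$ by uniqueness in the implicit function theorem), followed by an application of the implicit function theorem in the $k$ parameters, legitimate precisely because the vectors $F_{\lambda_i x}(\mu,0)\hat{v}$ span the $k$-dimensional space $Y_0$ so that $\tilde{G}_{\lambda}(\mu,0)$ is invertible.
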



\begin{thebibliography}{99}



\bibitem{Constantin}
A. Constantin, Nonlinear water waves with applications to wave-current interactions and tsunamis, CBMS-NSF Conference Series in Applied Mathematics 81, SIAM, Philadelphia, 2011.

\bibitem{ConstantinS}
A. Constantin, W. Strauss, Exact steady periodic water waves with vorticity, Comm. Pure Appl. Math., 2004, 57:481-527.

\bibitem{ConstantinS1}
A. Constantin, W. Strauss, Stability properties of steady water waves with vorticity, Commun. Pure Appl. Math., 2007, 60:911-950.

\bibitem{ConstantinSV}
A. Constantin, W. Strauss, E. V$\breve{a}$rv$\breve{a}$ruc$\breve{a}$, Global bifurcation of steady gravity water waves with critical layers, Acta Math., 2016, 217:195-262.

\bibitem{ConstantinV}
A. Constantin, E. V$\breve{a}$rv$\breve{a}$ruc$\breve{a}$, Steady periodic water waves with constant vorticity: regularity and local bifurcation, Arch. Ration. Mech. Anal., 2011, 199:33-67.

\bibitem{CrandallR}
M.G. Crandall, P.H. Rabinowitz, Bifurcation from simple eigenvalues, J. Funct. Anal., 1971, 8:321-340.

\bibitem{CrandallR1}
M.G. Crandall, P.H. Rabinowitz, Bifurcation, perturbation of simple eigenvalues and linearied stability, Arch. Ration. Mech. Anal., 1973, 52:161-180.

\bibitem{DyachenkoH1}
S.A. Dyachenko, V.M. Hur, Stokes waves with constant vorticity: folds, gaps and fluid bubbles, J. Fluid Mech., 2019, 878:502-521.

\bibitem{DyachenkoH2}
S.A. Dyachenko, V.M. Hur, Stokes waves with constant vorticity: I. Numerical computation, Stud. Appl. Math., 2019, 142:162-189.

\bibitem{EhrnstromEW}
M. Ehrnstr\"{o}m, J. Escher, E. Wahl\'{e}n, Steady water waves with multiple critical layers, SIAM J. Math. Anal., 2011, 43:1436-1456.

\bibitem{Haziot}
S.V. Haziot, Stratified large-amplitude steady periodic water waves with critical layers, Commun. Math. Phys., 2021, 381:765-797.

\bibitem{HenryAM}
D. Henry, A.V. Matioc, Global bifurcation of capillary-gravity-stratified water waves, Proc. Roy. Soc. Edinburgh Sect. A, 2014, 144:775-786.

\bibitem{HenryBM}
D. Henry, B.V. Matioc, On the existence of steady periodic capillary-gravity stratified water waves, Ann. Sc. Norm. Super. Pisa CI. Sci., 2013, 12:955-974.

\bibitem{HurW}
V.M. Hur, M.H. Wheeler, Exact free surfaces in constant vorticity flows, J. Fluid Mech., 2020, 896.

\bibitem{Kielhofer}
H. Kielh\"{o}fer, Bifurcation theory. An introduction with applications to partial differential equations. Second edition. Applied Mathematical Sciences, Springer, New York, 2012, 156.

\bibitem{Martin}
C.I. Martin, Local bifurcation and regularity for steady periodic capillary-gravity water waves with constant vorticity, Nonlinear Anal. Real World Appl., 2013, 14:131-149.

\bibitem{MartinM}
C.I. Martin, B.V. Matioc, Existence of Wilton ripples for water waves with constant vorticity and capillary effects, SIAM J. Appl. Math., 2013, 73:1582-1595.

\bibitem{Matioc}
B.V. Matioc, Global bifurcation for water waves with capillary effects and constant vorticity, Monatsh. Math., 2014, 174:459-475.

\bibitem{Shearer}
M. Shearer, Secondary bifurcation near a double eigenvalue, SIAM J. Math. Anal., 1980, 11:365-389.

\bibitem{Wahlen06}
E. Wahl\'{e}n, Steady periodic capillary-gravity waves with vorticity, SIAM J. Math. Anal., 2006, 38:921-943.

\bibitem{Wahlen09}
E. Wahl\'{e}n, Steady water waves with a critical layer, J. Differential Equations, 2009, 246:2468-2483.

\bibitem{Walsh09}
S. Walsh, Stratified steady periodic water waves, SIAM J. Math. Anal., 2009, 41:1054-1105.

\bibitem{Walsh14a}
S. Walsh, Steady stratified periodic gravity waves with surface tension I: local bifurcation, Discrete Contin. Dyn. Syst. Ser. A., 2014, 34:3241-3285.

\bibitem{Walsh14b}
S. Walsh, Steady stratified periodicgravity waveswith surfacetension II: global bifurcation, Discrete Contin. Dyn. Syst. Ser. A., 2014, 34:3287-3315.




\end{thebibliography}
\end{document}